\newtheorem{proposition}{Proposition}[section]
  \newtheorem{theorem}[proposition]{Theorem}
  \newtheorem{corollary}[proposition]{Corollary}
  \newtheorem{lemma}[proposition]{Lemma}
\theoremstyle{definition}
  \newtheorem{remark}[proposition]{Remark}
\newcommand{\cst}{\ifmmode\mathrm{C}^*\else{$\mathrm{C}^*$}\fi}
\newcommand{\st}{\;\vline\;}
\newcommand{\CC}{\mathbb{C}}
\newcommand{\NN}{\mathbb{N}}
\newcommand{\ZZ}{\mathbb{Z}}
\newcommand{\GG}{\mathbb{G}}
\newcommand{\RR}{\mathbb{R}}
\newcommand{\is}[2]{\left\langle#1\,\vline\,#2\right\rangle}
\newcommand{\tens}{\otimes}
\newcommand{\id}{\mathrm{id}}
\newcommand{\comp}{\!\circ\!}
\newcommand{\eps}{\varepsilon}
\newcommand{\vtens}{\,\bar{\otimes}\,}
\newcommand{\I}{\mathds{1}}
\newcommand{\cH}{\mathscr{H}}
\newcommand{\cK}{\mathscr{K}}
\newcommand{\sA}{\mathsf{A}}
\newcommand{\sB}{\mathsf{B}}
\newcommand{\cT}{\mathcal{T}}
\newcommand{\cU}{\mathcal{U}}
\newcommand{\bh}{\boldsymbol{h}}
\newcommand{\bpi}{\boldsymbol{\pi}}
\newcommand{\balpha}{\boldsymbol{\alpha}}
\newcommand{\bgamma}{\boldsymbol{\gamma}}
\newcommand{\bu}{\boldsymbol{u}}
\newcommand{\bs}{\boldsymbol{s}}
\newcommand{\bN}{\boldsymbol{N}}
\DeclareMathOperator{\C}{C}
\DeclareMathOperator{\B}{B}
\DeclareMathOperator{\Sp}{Sp}
\DeclareMathOperator{\Pol}{Pol}
\DeclareMathOperator{\Linf}{\mathnormal{L}^\infty\;\!\!}
\DeclareMathOperator{\Phase}{Phase}
\DeclareMathOperator{\vN}{vN}
\DeclareMathOperator{\sgn}{sgn}
\numberwithin{equation}{section}
\author{Jacek Krajczok}
\address{Department of Mathematical Methods in Physics, Faculty of Physics, University of Warsaw}
\email{jk347906@okwf.fuw.edu.pl}
\author{Piotr M.~So{\l}tan}
\thanks{Partially supported by the National Science Center (NCN) grant no.~2015/17/B/ST1/00085}
\address{Department of Mathematical Methods in Physics, Faculty of Physics, University of Warsaw}
\email{piotr.soltan@fuw.edu.pl}
\title[Center of $\operatorname{C}(\mathrm{SU}_q(2))$]{Center of the algebra of functions on the quantum group $\mathrm{SU}_q(2)$ and related topics}
\subjclass[2010]{Primary: 16T20, Secondary: 46L89, 20G42}
\keywords{Compact quantum group, center, Haar measure, counit}
\begin{document}

\begin{abstract}
The center of the algebra of continuous functions on the quantum group $\mathrm{SU}_q(2)$ is determined as well as centers of other related algebras. Several other results concerning this quantum group are given with direct proofs based on concrete realization of these algebras as algebras of operators on a Hilbert space.
\end{abstract}

\maketitle

\begin{center}
\emph{Dedicated to Marek Bo\.zejko on the occasion of his 70th birthday.}
\end{center}

\section{Introduction}

The aim of this paper is to provide very direct and relatively elementary proofs of certain facts concerning the quantum $\mathrm{SU}(2)$ group introduced by S.L.~Woronowicz in the seminal paper \cite{su2}. The issues addressed in this paper are the following
\begin{itemize}
\item faithfulness of the representation $\bpi$ introduced in \cite[Proof of Theorem 1.2]{su2},
\item\sloppy determining the center of the algebras $\Pol(\mathrm{SU}_q(2))$, $\C(\mathrm{SU}_q(2))$ and $\Linf(\mathrm{SU}_q(2))$ as well as the commutant of $\bpi(\C(\mathrm{SU}_q(2)))$,
\item giving direct proofs of faithfulness of Haar measure and continuity of the counit.
\end{itemize}
The above tasks are interrelated and the relations between them will be explained in detail.

Most results of this work are taken from the first author's BSc thesis submitted at the Faculty of Physics, University of Warsaw. These results are known and in most cases proofs are published, but our approach is rather elementary and direct. 

The quantum groups $\mathrm{SU}_q(2)$ were introduced in \cite{su2} and later studied in numerous papers in mathematics and theoretical physics. Apart from \cite{su2} our approach will be based on fundamental texts \cite{pseudogroups,cqg} and more specialized \cite{bmt,lance,NeshTu,contraction}. Methods of functional analysis and operator algebras are covered in textbooks such as \cite{davidson,rudin,sz}.

The paper is organized as follows: in the next subsections we briefly introduce terminology and notation needed in the remainder of the paper. Section \ref{Asect} is devoted to a detailed proof of faithfulness of a particular representation of the algebra of functions on the quantum group $\mathrm{SU}_q(2)$ defined in \cite{su2}. In Section \ref{qg} we introduce the additional structure on the \cst-algebra studied in Section \ref{Asect} which defines the quantum group $\mathrm{SU}_q(2)$. We also list some objects needed for later sections and recall the formula for the Haar measure. Section \ref{cen} provides the proof of the main result of the paper, namely that the center of the algebra of continuous functions on $\mathrm{SU}_q(2)$ is trivial. This is achieved by examining the commutant of this algebra in the faithful representation studied earlier. These results are used in Section \ref{Haar} to prove that the Haar measure of $\mathrm{SU}_q(2)$ is faithful and its co-unit is continuous (the latter fact is justified in two different ways). Section \ref{gns} is devoted to determining the center of the von Neumann algebra generated by the image of the algebra of continuous functions on $\mathrm{SU}_q(2)$ in the GNS representation for the Haar measure, i.e.~the center of the algebra $\Linf(\mathrm{SU}_q(2))$. Finally in Section \ref{later} we sketch a way to use some of our results and some major results from the literature to obtain an alternative proof of faithfulness of $\bpi$. 

\subsection{Terminology and notation of compact quantum group theory}

We will follow some of the modern texts on quantum groups in declaring a compact quantum group to be an abstract object of the category dual to the category of \cst-algebras related to a unital \cst-algebra with additional structure (see e.g.~\cite{NeshTu} and Section \ref{qg}). In particular for a compact quantum group $\GG$ we write $\C(\GG)$ for the corresponding \cst-algebra which we refer to as the algebra of continuous functions on $\GG$. It is important to note that the actual \emph{set} $\GG$ does not exist.

\subsection{Notation for spectral subspaces and polar decompositions}

In the proofs of some of the results we will employ very useful, yet rather non-standard notation for spectral subspaces of operators introduced in \cite[Section 0]{qef} and put to use e.g.~in \cite[Section 3.1]{so3}. Let $\cH$ be a Hilbert space and $T$ be a normal operator on $\cH$. Let $f$ be a function on the spectrum $\Sp{T}$ of $T$ with values in $\{\text{true},\text{false}\}$. Our notation will be to write $\cH(f(T))$ for the spectral subspace for $T$ corresponding to the subset
\begin{equation}\label{subset}
\bigl\{\lambda\in\Sp{T}\st{f}(\lambda)=\text{true}\bigr\}.
\end{equation}
(we assume that the function $f$ is such that \eqref{subset} is measurable). This notation allows us to write e.g.
\begin{itemize}
\item $\cH(T=\lambda_0)$ for the spectral subspace for $T$ corresponding to $\{\lambda_0\}$,
\item $\cH(|T|>\eps)$ for the spectral subspace for $|T|$ corresponding to $]\eps,+\infty[$, 
\item $\cH(T\neq\lambda_0)$ for the spectral subspace for $T$ corresponding to $\Sp{T}\setminus\{\lambda_0\}$, i.e.~the orthogonal complement of $\cH(T=\lambda_0)$
\end{itemize}
and many other similar expressions. Note that the Hilbert space on which the operator acts is explicitly included in the notation. For example, if an operator $S$ acts on a Hilbert space $\cK$ then we accordingly use notation of the form $\cK(f(S))$, where $f$ is again a $\{\text{true},\text{false}\}$-valued function. Apart from $\cH(\,\dotsm)$ we will also use the symbol $\chi(\,\dotsm)$ to denote the projection onto the corresponding spectral subspace. 

We will also use the following convention for polar decompositions: if, as above, $T$ is a bounded operator on $\cH$, we write $|T|$ for the operator $\sqrt{T^*T}$ and the partial isometry entering the polar decomposition of $T$ will be denoted by $\Phase{T}$. Thus
\[
T=(\Phase{T})|T|
\]
will always denote the polar decomposition of $T$.

\section{The algebra of functions on the quantum group $\mathrm{SU}_q(2)$}\label{Asect}

Let $\sA$ be the universal \cst-algebra generated by two elements $\alpha$ and $\gamma$ subject to relations
\begin{equation}\label{relSUq2}
\begin{aligned}
\alpha^*\alpha+\gamma^*\gamma&=\I,&&\alpha\gamma=q\gamma\alpha,\\
\alpha\alpha^*+q^2\gamma^*\gamma&=\I,&&\gamma^*\gamma=\gamma\gamma^*.
\end{aligned}
\end{equation}
where $q$ is a parameter in $]-1,1[\,\setminus\{0\}$. We remark that due to the Fuglede-Putnam theorem (\cite[Section 12.16]{rudin}) the relation $\alpha\gamma^*=q\gamma^*\alpha$ follows from \eqref{relSUq2} (cf.~\cite[Sections 1.3 \& 3.1]{so3}). It is a matter of simple computation to see that the relations \eqref{relSUq2} are equivalent to unitarity of the matrix
\[
\begin{bmatrix}\alpha&-q\gamma^*\\\gamma&\alpha^*\end{bmatrix}.
\]
This fact immediately shows that the universal \cst-algebra generated by $\alpha$ and $\gamma$ with relations \eqref{relSUq2} exists. Indeed any \cst-seminorm on the $*$-algebra generated by symbols $\alpha$ and $\gamma$ subject to \eqref{relSUq2} must be less or equal to $1$ on entries of a unitary matrix. This implies that for any non-commutative polynomial $a$ in $\alpha,\alpha^*,\gamma,\gamma^*$ and $\I$ the quantity
\[
\|a\|=\sup_{\varrho}\bigl\|\varrho(a)\bigr\|
\]
(where the supremum is taken over all $*$-representations of the $*$-algebra generated by $\alpha$ and $\gamma$ on Hilbert spaces) is finite. This is clearly a \cst-seminorm, but due to \cite[Theorem 1.2]{su2} it is in fact a norm. 
In the proof of this result S.L.~Woronowicz introduced a special representation $\bpi$ of the $*$-algebra generated by $\alpha$ and $\gamma$ which was shown to be injective. Just before statement of \cite[Theorem A2.3]{su2} it is mentioned that the representation $\bpi$ is faithful on $\sA$. We will now give a proof of this result. Before proceeding let us mention that instead of giving a direct proof of faithfulness of $\bpi$ one can use a combination of results of \cite{contraction} or \cite{lance} and \cite{bmt} together with our results to arrive at the same conclusion (cf.~Section \ref{later}).

\subsection{Faithfulness of $\bpi$}\label{fpi}

The universal property of $\sA$ is the following: for any unital \cst-algebra $\sB$ containing two elements $\alpha_0$ and $\gamma_0$ such that
\begin{equation}\label{rel0}
\begin{aligned}
\alpha_0^*\alpha_0+\gamma_0^*\gamma_0&=\I,&&\alpha_0\gamma_0=q\gamma_0\alpha_0,\\
\alpha_0\alpha_0^*+q^2\gamma_0^*\gamma_0&=\I,&&\gamma_0^*\gamma_0=\gamma_0\gamma_0^*.
\end{aligned}
\end{equation}
there is a unique unital $*$-homomorphism $\varrho\colon\sA\to\sB$ such that
\begin{equation}\label{ag0}
\varrho(\alpha)=\alpha_0\quad\text{and}\quad\varrho(\gamma)=\gamma_0.
\end{equation}
By the Gelfand-Naimark theorem this property is equivalent to a simpler one: for any Hilbert space $\cH_0$ and any pair $(\alpha_0,\gamma_0)$ of operators on $\cH_0$ satisfying \eqref{rel0} there exists a unique representation $\varrho$ of $\sA$ on $\cH_0$ such that \eqref{ag0} holds.

\sloppy
Following \cite[Proof of Theorem 1.2]{su2} will now introduce the representation $\bpi$ mentioned above using this universal property: let $\cH=\ell_2(\ZZ_+\times\ZZ)$ and let $\{e_{n,k}\}_{\substack{n\in\ZZ_+\!\!\!\\k\in\ZZ}}$ be the standard orthonormal basis of $\cH$. Define operators $\balpha$ and $\bgamma$ on $\cH$ by
\begin{equation}\label{dzialanie}
\begin{split}
\balpha{e_{n,k}}&=\sqrt{1-q^{2n}}\,e_{n-1,k},\\
\bgamma{e_{n,k}}&=q^n\,e_{n,k+1}.
\end{split}
\end{equation}
The operators $\balpha$ and $\bgamma$ satisfy the defining relations of $\sA$, so there exists a representation $\bpi$ of $\sA$ on $\cH$ such that
\[
\bpi(\alpha)=\balpha\quad\text{and}\quad\bpi(\gamma)=\bgamma.
\]
For future reference let us note the action of $\balpha^*$ and $\bgamma^*$:
\begin{equation}\label{dzialanie2}
\begin{split}
\balpha^*{e_{n,k}}&=\sqrt{1-q^{2(n+1)}}\,e_{n+1,k},\\
\bgamma^*{e_{n,k}}&=q^n\,e_{n,k-1}.
\end{split}
\end{equation}

Our aim is to show that $\bpi$ is faithful. Let $\cst(\balpha,\bgamma)$ be the smallest \cst-algebra of operators on $\cH$ containing $\balpha$ and $\bgamma$. It is easy to see that $\cst(\balpha,\bgamma)$ is the image of the representation $\bpi$. Presently let us note that $\cH$ is isomorphic to $\ell_2(\ZZ_+)\tens\ell_2(\ZZ)$ with the isomorphism mapping $e_{n,k}$ to $e_n\tens{e_k}$ and under this isomorphism the operators $\balpha$ and $\bgamma$ are transformed to
\begin{equation}\label{balga}
\begin{split}
\balpha&=\bs\sqrt{\I-q^{2\bN}}\tens\I,\\
\bgamma&=q^{\bN}\tens\bu,
\end{split}
\end{equation}
where
\begin{itemize}
\item $\bs$ is the unilateral shift
\begin{equation}\label{bs}
\bs\colon{e_n}\longmapsto\begin{cases}e_{n-1}&n>0,\\0&n=0,\end{cases}
\end{equation}
\item $\bN$ is the unbounded self-adjoint operator of multiplication by the sequence $(1,2,3,\ldots)$:
\begin{equation}\label{bN}
\bN\colon{e_n}\longmapsto{n}e_n,\qquad{n\in\ZZ_+},
\end{equation}
\item $\bu$ is the bilateral shift
\[
\bu\colon{e_k}\longmapsto{e_{k+1}},\qquad{k}\in\ZZ.
\]
\end{itemize}
In the proof of Theorem \ref{faithfulPi} below we will establish a similar decomposition for an arbitrary pair of operators satisfying relations \eqref{relSUq2}. Let $\cT$ be the algebra of operators on $\ell_2(\ZZ_+)$ generated by $\bs$ (the Toeplitz algebra, cf.~\cite[Section V.1]{davidson}) and similarly let $\cU$ be the algebra of operators on $\ell_2(\ZZ)$ generated by $\bu$. Then clearly $\balpha$ and $\bgamma$ belong to $\cT\tens\cU\subset\B(\ell_2(\ZZ_+)\tens\ell_2(\ZZ))$ and consequently $\bpi(\sA)\subset\cT\tens\cU$.

\begin{theorem}\label{faithfulPi}
Let $\cH_0$ be a Hilbert space and $(\alpha_0,\gamma_0)$ a pair of bounded operators on $\cH_0$ satisfying relations \eqref{rel0}. Then there exists a unique unital $*$-homomorphism $\varrho\colon\cst(\balpha,\bgamma)\to\B(\cH_0)$ such that
\[
\varrho(\balpha)=\alpha_0\quad\text{and}\quad\varrho(\bgamma)=\gamma_0.
\]
\end{theorem}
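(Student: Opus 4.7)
The uniqueness is automatic because $\balpha$ and $\bgamma$ generate $\cst(\balpha,\bgamma)$. For existence the plan is to show that any pair $(\alpha_0,\gamma_0)$ satisfying \eqref{rel0} admits a decomposition of the same shape as \eqref{balga}, and then, using the containment $\cst(\balpha,\bgamma)\subseteq\cT\tens\cU$ noted just above together with nuclearity of $\cT$ and $\cU\cong\C(\TT)$, realise $\varrho$ as the restriction of a $*$-homomorphism $\cT\tens\cU\to\B(\cH_0)$ built from that decomposition.

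Set $H_0:=\gamma_0^*\gamma_0$. From \eqref{rel0} together with Fuglede--Putnam one has $\alpha_0^*\alpha_0=\I-H_0$, $\alpha_0\alpha_0^*=\I-q^2H_0$ and $\alpha_0H_0=q^2H_0\alpha_0$. Polar-decompose $\alpha_0=v_0|\alpha_0|$: since $\I-q^2H_0\geq(1-q^2)\I>0$ we get $v_0v_0^*=\I$, while $v_0^*v_0=\I-P_0$ with $P_0:=\chi(H_0=1)$. Expanding $\alpha_0\alpha_0^*=v_0(\I-H_0)v_0^*$ and equating with $\I-q^2H_0$ produces $v_0H_0v_0^*=q^2H_0$, equivalently (using $v_0P_0=0$) $v_0H_0=q^2H_0v_0$. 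A straightforward induction then yields
\[
(\alpha_0^*)^n\alpha_0^n\;=\;\prod_{k=0}^{n-1}(\I-q^{-2k}H_0),\qquad n\geq 1.
\]
Since the left side is positive for every $n$ and $\Sp(H_0)\subseteq[0,1]$, an elementary sign analysis on the polynomial $\prod_{k=0}^{n-1}(1-q^{-2k}\lambda)$ (a point $\lambda\in(q^{2(k+1)},q^{2k})$ makes the factor with $j=k+1$ negative and all earlier ones positive) forces $\Sp(H_0)\subseteq\{q^{2n}:n\geq 0\}\cup\{0\}$. This spectral constraint is the main technical step of the whole proof.

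Put $E_n:=\chi(H_0=q^{2n})$ and $E_\infty:=\chi(H_0=0)$, so $\cH_0=E_\infty\cH_0\oplus\bigoplus_{n\geq 0}E_n\cH_0$. The intertwining $v_0H_0=q^2H_0v_0$ implies $v_0E_n=E_{n-1}v_0$ for $n\geq 1$ and $v_0E_0=0$; combined with $v_0v_0^*=\I$ and $v_0^*v_0=\I-E_0$ this makes each restriction $v_0|_{E_n\cH_0}\colon E_n\cH_0\to E_{n-1}\cH_0$ a unitary for $n\geq 1$. Iterating produces a unitary identification $\bigoplus_{n\geq 0}E_n\cH_0\cong\ell_2(\ZZ_+)\tens\cK$ with $\cK:=E_0\cH_0$, under which $v_0\leftrightarrow\bs\tens\I$, $H_0\leftrightarrow q^{2\bN}\tens\I$ and hence $\alpha_0\leftrightarrow\bs\sqrt{\I-q^{2\bN}}\tens\I$. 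The phase $w_0:=\Phase(\gamma_0)$ commutes with $H_0$ by normality, so preserves each $E_n\cH_0$; restricting the covariance $\alpha_0\gamma_0=q\gamma_0\alpha_0$ to $E_n\cH_0$ gives $v_0|_{E_n\cH_0}\cdot w_0|_{E_n\cH_0}=w_0|_{E_{n-1}\cH_0}\cdot v_0|_{E_n\cH_0}$, which under the previous identifications means that all the unitaries $w_0|_{E_n\cH_0}$ correspond to a single unitary $W\in\B(\cK)$, so $\gamma_0\leftrightarrow q^{\bN}\tens W$. On the remaining summand $E_\infty\cH_0$ the relations force $\gamma_0=0$ and $\alpha_0=U$ for some unitary $U$.

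Finally we build $\varrho$. Define a unital $*$-homomorphism $\Phi\colon\cT\tens\cU\to\B(\cH_0)$ in two pieces. On $\ell_2(\ZZ_+)\tens\cK$ set $\Phi_1(t\tens u)=t\tens\pi_W(u)$, where $\pi_W\colon\cU\to\B(\cK)$ is the representation of $\cU\cong\C(\TT)$ determined by $\bu\mapsto W$. On $E_\infty\cH_0$ set $\Phi_2(t\tens u)=\sigma_U(q_T(t))\sigma(u)$, where $q_T\colon\cT\to\C(\TT)$ is the Toeplitz quotient, $\sigma_U\colon\C(\TT)\to\B(E_\infty\cH_0)$ sends the coordinate to $U$, and $\sigma\colon\cU\to\B(E_\infty\cH_0)$ is any representation commuting with $\sigma_U$ (e.g.\ $\bu\mapsto\I$); note that $q^{\bN}$ is compact, so $q_T(q^{\bN})=0$ and $q_T(\sqrt{\I-q^{2\bN}})=1$. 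Nuclearity of $\cT$ and $\cU$ guarantees $\Phi:=\Phi_1\oplus\Phi_2$ is well-defined on $\cT\tens\cU$, and inspection of \eqref{balga} shows $\Phi(\balpha)=\alpha_0$ and $\Phi(\bgamma)=\gamma_0$; then $\varrho:=\Phi|_{\cst(\balpha,\bgamma)}$ is the required map. The heart of the argument is the spectral restriction of $H_0$ in paragraph~2; everything else is essentially bookkeeping once that is secured.
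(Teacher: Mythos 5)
Your proof is correct and its overall architecture coincides with the paper's: reduce an arbitrary pair $(\alpha_0,\gamma_0)$ to the normal form $\bs\sqrt{\I-q^{2\bN}}\tens\I$, $q^{\bN}\tens W$ on $(\ker\gamma_0)^{\perp}$ (plus a unitary with $\gamma_0=0$ on $\ker\gamma_0$), and then obtain $\varrho$ by restricting to $\cst(\balpha,\bgamma)\subset\cT\tens\cU$ a $*$-homomorphism assembled from representations of $\cT$ and $\cU$. The one place where you take a genuinely different route is the key spectral constraint on $H_0=\gamma_0^*\gamma_0$. The paper gets it from $\Sp(\alpha_0^*\alpha_0)\setminus\{0\}=\Sp(\alpha_0\alpha_0^*)\setminus\{0\}$, which turns the two quadratic relations into the self-similarity equation $\Sp(H_0)\setminus\{1\}=q^2\Sp(H_0)$ and reads off the answer; you instead prove $(\alpha_0^*)^n\alpha_0^n=\prod_{k=0}^{n-1}(\I-q^{-2k}H_0)$ by induction from $\alpha_0H_0=q^2H_0\alpha_0$ and exclude every $\lambda\notin\{0\}\cup\{q^{2n}\st n\in\ZZ_+\}$ by a sign count on the factors. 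Both arguments are valid; the paper's is shorter and uses only the two ``unitarity'' relations at that stage, while yours is more elementary and self-contained (positivity plus functional calculus) at the cost of invoking the $q$-commutation of $\alpha_0$ with $H_0$ earlier. The remaining differences are cosmetic: you take $\cK=\cH_0(|\gamma_0|=1)$ where the paper takes $\cH_0(|\gamma_0|=|q|)$; on $\ker\gamma_0$ you factor through the Toeplitz quotient $\cT\to\C(\TT)$ (legitimate, since $\alpha_0$ is unitary there) where the paper invokes Coburn's universal property of $\cT$ directly; and your appeal to nuclearity for $\Phi_2$ is fine, though $\Phi_1=\id\tens\pi_W$ needs no nuclearity on the minimal tensor product.
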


\begin{proof}
We have
\begin{subequations}
\begin{align}
\gamma_0^*\gamma_0&=\I-\alpha_0^*\alpha_0,\label{Sp1}\\
q^2\gamma_0^*\gamma_0&=\I-\alpha_0\alpha_0^*.\label{Sp2}
\end{align}
\end{subequations}
The spectrum of \eqref{Sp1} is $1-\Sp(\alpha_0^*\alpha_0)=\bigl\{1-\lambda\st\lambda\in\Sp(\alpha_0^*\alpha_0)\bigr\}$ while the spectrum of \eqref{Sp2} is  $\bigl\{1-\lambda\st\lambda\in\Sp(\alpha_0\alpha_0^*)\bigr\}$. Now recall that $\Sp(\alpha_0^*\alpha_0)\setminus\{0\}=\Sp(\alpha_0\alpha_0^*)\setminus\{0\}$, so
\[
\begin{split}
\Sp(\gamma_0^*\gamma_0)\setminus\{1\}&=\bigl\{1-\lambda\st\lambda\in\Sp(\alpha_0^*\alpha_0)\setminus\{0\}\bigr\}\\
&=\bigl\{1-\lambda\st\lambda\in\Sp(\alpha_0\alpha_0^*)\setminus\{0\}\bigr\}\\
&=\bigl\{1-\lambda\st\lambda\in\Sp(\alpha_0\alpha_0^*)\bigr\}\setminus\{1\}\\
&=q^2\Sp(\gamma_0^*\gamma_0)\setminus\{1\}.
\end{split}
\]
Finally note that since $\|\gamma_0\|\leq{1}$ (this follows easily from the relation $\alpha_0^*\alpha_0+\gamma_0^*\gamma_0=\I$) and $|q|<1$, the set $q^2\Sp(\gamma_0^*\gamma_0)$ is contained in $[0,q^2]$ and so it does not contain $1$. It follows that
\[
\Sp(\gamma_0^*\gamma_0)\setminus\{1\}=q^2\Sp(\gamma_0^*\gamma_0).
\]
It is now easy to see that either $\Sp(\gamma_0^*\gamma_0)=\{0\}$ (in which case $\gamma_0=0$) or
\[
\Sp(\gamma_0^*\gamma_0)=\{0\}\cup\bigl\{q^{2n}\st{n}\in\ZZ_+\bigr\}.
\]

In the former case the relations on $\alpha_0$ and $\gamma_0$ mean simply that $\alpha_0$ is unitary. In the latter we find that the operator $|\gamma_0|$ has discrete spectrum and the space $\cH_0$ can be decomposed into eigenspaces of $|\gamma_0|$:
\[
\cH_0=(\ker\gamma_0)\oplus\biggl(\bigoplus_{n=0}^\infty\cH_0(|\gamma_0|=|q|^n)\biggr).
\]

The operator $\Phase{\gamma_0}$ is zero on $\ker\gamma_0$ and is unitary on $\bigoplus\limits_{n=0}^\infty\cH_0(|\gamma_0|=|q|^n)$. Moreover, since $\Phase{\gamma_0}$ commutes with $|\gamma_0|$ we see that $\Phase{\gamma_0}$ must map each subspace $\cH_0(|\gamma_0|=|q|^n)$ into itself: for $\psi\in\cH_0(|\gamma_0|=|q|^n)$ we have
\[
|\gamma_0|(\Phase{\gamma_0})\psi=(\Phase{\gamma_0})|\gamma_0|\psi=|q|^n(\Phase{\gamma_0})\psi.
\]

\sloppy
Next let us analyze the partial isometry $\Phase{\alpha_0}$. Its initial projection $(\Phase{\alpha_0})^*(\Phase{\alpha_0})$ is the projection onto $\cH_0(|\alpha_0|\neq{0})=\cH_0(|\gamma_0|\neq{1})$, while its final projection $(\Phase{\alpha_0})(\Phase{\alpha_0})^*$ is the projection onto the closure of the range of $\alpha_0$. Note that it follows from $\alpha_0\alpha_0^*=\I-q^2|\gamma_0|^2$ that the range of $\alpha_0$ is all of $\cH_0$, so
\begin{equation}\label{PhAlI}
(\Phase{\alpha_0})(\Phase{\alpha_0})^*=\I.
\end{equation}
Since $|\alpha_0|^2=\I-|\gamma_0|^2$, the relation $\alpha_0\alpha_0^*=\I-q^2|\gamma_0|^2$ can be rewritten as
\[
(\Phase{\alpha_0})\bigl(\I-|\gamma_0|^2\bigr)(\Phase{\alpha_0})^*=\I-q^2|\gamma_0|^2
\]
which in view of \eqref{PhAlI} is
\begin{equation}\label{PgP}
(\Phase{\alpha_0})|\gamma_0|(\Phase{\alpha_0})^*=|q||\gamma_0|.
\end{equation}
Now, multiplying \eqref{PgP} from the right by $\Phase{\alpha_0}$ yields
\[
(\Phase{\alpha_0})|\gamma_0|\chi(|\gamma_0|\neq{1})=|q||\gamma_0|(\Phase{\alpha_0}),
\]
so on $\cH_0(|\gamma_0|\neq{1})$ we have
\begin{equation}\label{PggP}
(\Phase{\alpha_0})|\gamma_0|=|q||\gamma_0|(\Phase{\alpha_0}).
\end{equation}
It follows that $\Phase{\alpha_0}$ maps $\cH_0(|\gamma_0|=1)$ to zero and $\cH_0(|\gamma_0|=|q|^n)$ into $\cH_0(|\gamma_0|=|q|^{n-1})$ for $n>0$. Moreover the map
\[
\bigl.\Phase{\alpha_0}\bigr|_{\cH_0(|\gamma_0|=|q|^n)}\colon\cH_0(|\gamma_0|=|q|^n)\longrightarrow\cH_0(|\gamma_0|=|q|^{n-1})
\]
is onto because the range of $\Phase{\alpha_0}$ is all of $\cH_0$.

Now let us rewrite $\alpha_0\gamma_0=q\gamma_0\alpha_0$ in terms of the respective polar decompositions:
\[
(\Phase{\alpha_0})\sqrt{\I-|\gamma_0|^2}(\Phase{\gamma_0})|\gamma_0|=q(\Phase{\gamma_0})|\gamma_0|(\Phase{\alpha_0})\sqrt{\I-|\gamma_0|^2}
\]
or
\[
(\Phase{\alpha_0})(\Phase{\gamma_0})|\gamma_0|\sqrt{\I-|\gamma_0|^2}=q(\Phase{\gamma_0})|\gamma_0|(\Phase{\alpha_0})\sqrt{\I-|\gamma_0|^2}.
\]
On $\cH_0(|\gamma_0|\neq{1})$ the operator $\sqrt{\I-|\gamma_0|^2}$ is invertible, so
\[
(\Phase{\alpha_0})(\Phase{\gamma_0})|\gamma_0|=q(\Phase{\gamma_0})|\gamma_0|(\Phase{\alpha_0})
\]
and using \eqref{PggP} we obtain
\[
(\Phase{\alpha_0})(\Phase{\gamma_0})|\gamma_0|=\sgn(q)(\Phase{\gamma_0})(\Phase{\alpha_0})|\gamma_0|
\]
on $\cH_0(|\gamma_0|\neq{1})$, where $\sgn(q)=+1$ if $q>0$ and $\sgn(q)=-1$ otherwise. It follows that $(\Phase{\alpha_0})(\Phase{\gamma_0})=\sgn(q)(\Phase{\gamma_0})(\Phase{\alpha_0})$ on $\cH_0(|\gamma_0|\neq{1})$, while on $\cH_0(|\gamma_0|=1)$ we have $\Phase{\alpha_0}=0$, so 
\begin{equation}\label{komutowaniesgn}
(\Phase{\alpha_0})(\Phase{\gamma_0})=\sgn(q)(\Phase{\gamma_0})(\Phase{\alpha_0})
\end{equation}
on all of $\cH_0$.

We now see that the pair $(\alpha_0,\gamma_0)$ is specified uniquely by the normal operator $\gamma_0$ and a partial isometry $\Phase{\alpha_0}$ which is zero on $\cH_0(|\gamma_0|=1)$, maps $\cH_0(|\gamma_0|=|q|^n)$ onto $\cH_0(|\gamma_0|=|q|^{n-1})$ for $n>0$ and satisfies \eqref{komutowaniesgn}. As $\Phase{\gamma_0}$ restricts to a unitary map
\[
\cH_0(|\gamma_0|=|q|^n)\longrightarrow\cH_0(|\gamma_0|=|q|^n)
\]
for each $n$, we see that all these spaces are isomorphic (the isomorphism $\cH_0(|\gamma_0|=|q|^n)\to\cH_0(|\gamma_0|=|q|^{n+1})$ being provided by $(\Phase{\alpha_0})^*$) and the action of $\Phase{\gamma_0}$ on each of these spaces is unitarily equivalent to e.g.~the one on $\cH_0(|\gamma_0|=1)$. In particular writing $\cK_0$ for $\cH_0(|\gamma_0|=1)$ and $u_0$ for $\bigl.\Phase{\gamma_0}\bigr|_{\cK_0}$ we have a unitary operator
\[
U\colon\cH_0(|\gamma_0|\neq{0})\longrightarrow\ell_2(\ZZ_+)\tens\cK_0
\]
with
\[
\begin{split}
U\gamma_0U^*&=q^{\bN}\tens{u_0},\\
U\alpha_0U^*&=\bs\sqrt{\I-q^{2\bN}}\tens\I
\end{split}
\]
where $\bs$ and $\bN$ are the operators described by \eqref{bs} and \eqref{bN} respectively.

Now by universal properties of the \cst-algebras $\cT$ and $\cU$ (\cite[Theorem V.2.2]{davidson}) there exist unique representations
\[
\begin{split}
\varrho_0^1&\colon\cT\longrightarrow\B(\ker{\gamma_0}),\\
\varrho_0^2&\colon\cU\longrightarrow\B(\cK_0),\\
\varrho_0^3&\colon\cU\longrightarrow\CC
\end{split}
\]
such that
\[
\begin{split}
\varrho_0^1(\bs)&=\bigl.(\Phase{\alpha_0})\bigr|_{\ker{\gamma_0}},\\
\varrho_0^2(\bu)&=u_0
\end{split}
\]
and $\varrho_0^3(\bu)=1$. Now let $\varrho$ be the restriction of the mapping
\[
\cT\tens\cU\ni{x}\longmapsto\bigl((\varrho_0^1\tens\varrho_0^3)(x),U^*(\id\tens\varrho_0^2)(x)U\bigr)
\in\B(\ker{\gamma_0})\oplus\B(\cH_0(\gamma_0\neq{0}))\subset\B(\cH_0).
\]
to $\bpi(\sA)\subset\cT\tens\cU$. It satisfies
\[
\varrho(\balpha)=\alpha_0\quad\text{and}\quad\varrho(\bgamma)=\gamma_0.
\]

Uniqueness of $\varrho$ is clear, as $\cst(\balpha,\bgamma)$ is the smallest \cst-algebra of operators on $\cH$ containing $\balpha$ and $\bgamma$ and the value of $\varrho$ on these operators is specified.
\end{proof}

Theorem \ref{faithfulPi} immediately implies the following corollary:

\begin{corollary}
The representation $\bpi$ is faithful. In particular $\sA$ is isomorphic to the \cst-algebra generated by the operators $\balpha$ and $\bgamma$.
\end{corollary}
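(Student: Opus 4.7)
The plan is to deduce this directly from Theorem \ref{faithfulPi} combined with the universal property used to define the norm on $\sA$. Recall that the norm on $\sA$ is defined as
\[
\|a\|_\sA=\sup_{\varrho_0}\bigl\|\varrho_0(a)\bigr\|,
\]
where the supremum runs over all $*$-representations of the underlying $*$-algebra; we must show $\|\bpi(a)\|=\|a\|_\sA$ for every $a\in\sA$.

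One inequality is trivial: since $\bpi$ itself is one of the $*$-representations contributing to the supremum, $\|\bpi(a)\|\leq\|a\|_\sA$. For the reverse inequality, I would fix an arbitrary representation $\varrho_0\colon\sA\to\B(\cH_0)$ and set $\alpha_0=\varrho_0(\alpha)$, $\gamma_0=\varrho_0(\gamma)$. These operators satisfy \eqref{rel0}, so Theorem \ref{faithfulPi} produces a unital $*$-homomorphism $\varrho\colon\cst(\balpha,\bgamma)\to\B(\cH_0)$ with $\varrho(\balpha)=\alpha_0$ and $\varrho(\bgamma)=\gamma_0$. The compositions $\varrho\comp\bpi$ and $\varrho_0$ then agree on the generators $\alpha$ and $\gamma$, hence coincide on all of $\sA$.

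Since $\varrho$ is a $*$-homomorphism between \cst-algebras it is contractive, and therefore
\[
\bigl\|\varrho_0(a)\bigr\|=\bigl\|\varrho(\bpi(a))\bigr\|\leq\bigl\|\bpi(a)\bigr\|.
\]
Taking the supremum over all $\varrho_0$ gives $\|a\|_\sA\leq\|\bpi(a)\|$, so $\bpi$ is isometric and in particular injective. The second assertion is then immediate, since $\bpi(\sA)=\cst(\balpha,\bgamma)$ by construction.

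There is essentially no obstacle here: all of the work has already been absorbed into Theorem \ref{faithfulPi}, whose statement is in fact strictly stronger than faithfulness of $\bpi$ — it says that the concrete pair $(\balpha,\bgamma)$ is universal among all pairs of operators satisfying \eqref{rel0}. The corollary is simply the translation of that universal property into the language of the abstract \cst-algebra $\sA$.
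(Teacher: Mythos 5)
Your argument is correct and is exactly the reasoning the paper leaves implicit when it says Theorem \ref{faithfulPi} ``immediately implies'' the corollary: the inverse homomorphism supplied by the theorem forces $\|\varrho_0(a)\|\leq\|\bpi(a)\|$ for every representation $\varrho_0$, hence $\bpi$ is isometric. No comparison is needed beyond noting that you have simply written out the standard universal-property argument the authors took for granted.
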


\section{The quantum group $\mathrm{SU}_q(2)$}\label{qg}

In \cite{su2} S.L.~Woronowicz found that the algebra $\sA$ described in Section \ref{Asect} possesses very rich structure. In particular there is a unique $*$-homomorphism $\Delta\colon\sA\to\sA\tens\sA$ called a \emph{comultiplication} such that
\[
\begin{split}
\Delta(\alpha)&=\alpha\tens\alpha-q\gamma^*\tens\gamma,\\
\Delta(\gamma)&=\gamma\tens\alpha+\alpha^*\tens\gamma
\end{split}
\]
(the tensor product $\sA\tens\sA$ is unambiguous because $\sA$ can be shown to be nuclear, cf.~\cite[Appendix A2]{su2}). Moreover one easily sees that $\Delta$ satisfies
\[
(\Delta\tens\id)\comp\Delta=(\id\tens\Delta)\comp\Delta
\]
i.e.~it is \emph{coassociative}. The pair $(\sA,\Delta)$ satisfies the conditions of \cite[Definition 2.1]{cqg}, so that $\sA$ is an algebra of functions on a compact quantum group. This quantum group is denoted by the symbol $\mathrm{SU}_q(2)$. Hence, the algebra $\sA$ is also denoted by the symbol $\C(\mathrm{SU}_q(2))$.

The isomorphism $\bpi$ of $\C(\mathrm{SU}_q(2))$ onto $\cst(\balpha,\bgamma)$ provides a comultiplication on the latter \cst-algebra. However its existence may be proved directly without knowing that $\bpi$ is faithful. In fact the existence of $\Delta$ on $\cst(\balpha,\bgamma)$ together with several other results can be used to \emph{prove} that $\bpi$ is a faithful representation (see Section \ref{later}). 

The $*$-algebra generated by $\alpha$ and $\gamma$ with appropriate restriction of $\Delta$ is a Hopf $*$-algebra and we denote it by $\Pol(\mathrm{SU}_q(2))$. A convenient basis of $\Pol(\mathrm{SU}_q(2))$ is given by the set $\{a^{k,m,n}\}_{k\in\ZZ,\:m,n\in\ZZ_+}$, where
\begin{equation}\label{baza}
a^{k,m,n}=
\begin{cases}
\alpha^k\gamma^m{\gamma^*}^n&k\geq{0},\\
{\alpha^*}^{-k}\gamma^m{\gamma^*}^n&k<0
\end{cases}
\end{equation}
(\cite[Theorem 1.2]{su2}). For future convenience we will denote the set $\ZZ\times\ZZ_+\times\ZZ_+$ labeling the basis by $\Gamma$.

As any other compact quantum group, the quantum group $\mathrm{SU}_q(2)$ possesses the \emph{Haar measure} which is the unique state $\bh$ on $\C(\mathrm{SU}_q(2))$ which satisfies
\[
(\bh\tens\id)\Delta(a)=\bh(a)\I=(\id\tens\bh)\Delta(a),\qquad{a}\in\C(\mathrm{SU}_q(2)).
\]
(see \cite{su2,pseudogroups,cqg}). The state $\bh$ was found by S.L.~Woronowicz who produced an explicit formula
\[
\bh(a)=(1-q^2)\sum_{n=0}^{\infty}q^{2n}\is{e_{n,0}}{a\,e_{n,0}},\qquad{a}\in\C(\mathrm{SU}_q(2)),
\]
where $\C(\mathrm{SU}_q(2))$ is identified with the \cst-algebra of operators on $\ell_2(\ZZ_+\times\ZZ)$ generated by $\balpha$ and $\bgamma$. As noted already in \cite{pseudogroups} the Haar measure of a compact quantum group need not be faithful. However it is faithful for the quantum $\mathrm{SU}(2)$ groups, a fact whose proof we will give in Section \ref{Haar}.

\section{Center of $\C(\mathrm{SU}_q(2))$}\label{cen}

\subsection{Commutant of $\C(\mathrm{SU}_q(2))$}

By results of Section \ref{faithfulPi} we can identify $\C(\mathrm{SU}_q(2))$ with an algebra of operators on the Hilbert space $\ell_2(\ZZ_+\times\ZZ)$ which we will continue to denote by $\cH$. To keep the notation lighter we will denote the set $\ZZ_+\times\ZZ$ by $\Lambda$. 

We need to introduce a measurable and bounded function $f$:
\[
f\colon \Sp (\bgamma ^*\bgamma)=\{0\}\cup\bigl\{q^{2n}\; \big| \; n\in\ZZ_+\bigr\}\rightarrow \CC\colon \begin{cases} 0\mapsto 1, \\ q^{2n} \mapsto (\sgn (q))^n, \qquad{ n\in\ZZ_+},\end{cases}
\]
and an operator $\underline{\Phase \bgamma}=f(\bgamma^*\bgamma)\Phase \bgamma$. It's easy to see that $\underline{\Phase \bgamma}$ acts on $\cH$ as vertical bilateral shift:
\[
(\underline{\Phase \bgamma})e_{n,k}=e_{n,k+1}, \qquad{(n,k)\in\Lambda}.
\]

We begin by describing the commutant $\C(\mathrm{SU}_q(2))'$ of this algebra of operators.

\begin{theorem}\label{comm}
The commutant of $\C(\mathrm{SU}_q(2))$ in $\B(\cH)$ coincides with the von Neumann algebra generated by $\underline{\Phase{\bgamma}}$.
\end{theorem}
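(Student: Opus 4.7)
The plan is to prove the two inclusions separately. The inclusion $\vN(\underline{\Phase{\bgamma}}) \subseteq \C(\mathrm{SU}_q(2))'$ is a direct computation: since $\underline{\Phase{\bgamma}}$ acts on the basis as $(n,k) \mapsto (n,k+1)$, it visibly commutes with $\balpha$ (which only changes $n$) and with $\bgamma$ (which multiplies by a factor depending only on $n$ and shifts $k$ by $1$), so it commutes with the whole \cst-algebra they generate.

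For the reverse inclusion I would work in the tensor decomposition $\cH \cong \ell_2(\ZZ_+) \tens \ell_2(\ZZ)$ used in \eqref{balga}. Let $T \in \C(\mathrm{SU}_q(2))'$. Then $T$ commutes with $\bgamma^*\bgamma = q^{2\bN} \tens \I$, whose spectrum is $\{q^{2n} \st n \in \ZZ_+\}$ with all values distinct (since $0 < |q| < 1$). Hence $T$ preserves each eigenspace, which is exactly $\CC e_n \tens \ell_2(\ZZ)$, so $T$ decomposes as a direct sum $T = \bigoplus_{n \geq 0} T_n$ with $T_n \in \B(\ell_2(\ZZ))$.

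Next I would extract information from commuting with $\bgamma$ and with $\balpha$ separately. On the $n$th eigenspace $\bgamma$ acts as $q^n \bu$, so $T_n$ commutes with $\bu$ for every $n$. On the other hand, $\balpha$ sends $e_{n,k}$ to $\sqrt{1-q^{2n}}\,e_{n-1,k}$, and this scalar is nonzero for $n \geq 1$. Comparing $T\balpha e_{n,k}$ with $\balpha T e_{n,k}$ in coordinates therefore forces $T_{n-1} = T_n$ for every $n \geq 1$. All the blocks thus coincide with a single operator $S \in \{\bu\}'$, and $T = \I \tens S$.

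To conclude, I would invoke the fact that the bilateral shift $\bu$ has simple Lebesgue spectrum, so $\vN(\bu)$ is maximal abelian; therefore $\{\bu\}' = \vN(\bu)$, whence $T = \I \tens S \in \vN(\I \tens \bu) = \vN(\underline{\Phase{\bgamma}})$. The argument is mostly routine; the one point that deserves care is verifying that the factor $\sqrt{1-q^{2n}}$ in the action of $\balpha$ is nonzero for every $n \geq 1$, since without this the relation between $T_n$ and $T_{n-1}$ would be weaker than equality. Everything else is bookkeeping given the explicit formulas \eqref{dzialanie} and the tensor product description \eqref{balga}.
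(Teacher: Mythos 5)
Your proof is correct, but the second inclusion follows a genuinely different route from the paper's. The paper also first establishes (by a matrix-element computation using commutation with $\bgamma$, $\bgamma^*$ and $\balpha^*$) that any $T$ in the commutant satisfies $T^{n,k}_{n',k'}=\delta_{n,n'}T^{0,k-k'}_{0,0}$ --- which is exactly your conclusion $T=\I\tens S$ with $S\in\{\bu\}'$ in coordinates --- but then, instead of identifying $\{\bu\}'$, it takes an arbitrary $S'\in\vN(\underline{\Phase{\bgamma}})'$, computes its matrix elements, verifies by hand that $T$ commutes with $S'$, and invokes the double commutant theorem to get $T\in\vN(\underline{\Phase{\bgamma}})''=\vN(\underline{\Phase{\bgamma}})$. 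You short-circuit that last step by quoting the standard fact that the bilateral shift has a cyclic vector (simple Lebesgue spectrum), so $\vN(\bu)\cong\Linf(\TT)$ is maximal abelian in $\B(\ell_2(\ZZ))$ and $\{\bu\}'=\vN(\bu)$; together with $\vN(\I\tens\bu)=\I\tens\vN(\bu)$ this finishes the argument. Your block-diagonalization via the eigenspaces of $\bgamma^*\bgamma=q^{2\bN}\tens\I$ (legitimate since the eigenvalues $q^{2n}$ are distinct and the eigenspaces are the subspaces $\CC e_n\tens\ell_2(\ZZ)$) is also a cleaner packaging of the paper's relations \eqref{jeden}--\eqref{trzy}, and you correctly flag the only delicate point, namely $\sqrt{1-q^{2n}}\neq 0$ for $n\geq 1$, which forces $T_{n-1}=T_n$. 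The trade-off: your argument is shorter and more conceptual but leans on the maximal-abelianness of $L^\infty(\TT)$, while the paper's stays entirely at the level of matrix coefficients plus the bicommutant theorem.
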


\begin{proof}
Let $T\in\C(\mathrm{SU}_q(2))'$. Define matrix elements $\{T^{n,k}_{n',k'}\}_{(n,k),(n',k')\in\Lambda}$ of $T$ by
\begin{equation}\label{matEl}
Te_{n,k}=\sum_{(n',k')\in\Lambda}T^{n,k}_{n',k'}e_{n',k'},\qquad(n,k)\in\Lambda.
\end{equation}
Fixing $(n,k)\in\Lambda$ we compute
\[
\begin{split}
T\bgamma{e_{n,k}}&=Tq^n{e_{n,k+1}}=\sum_{(n',k')\in\Lambda}T^{n,k+1}_{n',k'}q^n{e_{n', k'}}
=\sum_{(n',k')\in\Lambda}T^{n,k+1}_{n',k'+1}q^{n}{e_{n',k'+1}},\\
\bgamma{T}{e_{n,k}}&=\bgamma\sum_{(n',k')\in\Lambda}T^{n,k}_{n', k'}e_{n',k'}
=\sum_{(n',k')\in\Lambda}T^{n,k}_{n',k'}q^{n'}e_{n',k'+1}.
\end{split}
\]
As $T$ commutes with $\bgamma$, we obtain
\begin{equation}\label{jeden}
T^{n,k+1}_{n',k'+1}q^n=T^{n,k}_{n',k'}q^{n'},\qquad(n,k),(n',k')\in\Lambda.
\end{equation}
Similarly the computation
\[
\begin{split}
T\bgamma^*e_{n,k}&=Tq^n{e_{n,k-1}}=\sum_{(n',k')\in\Lambda}T^{n,k-1}_{n', k'}q^n{e_{n',k'}}
=\sum_{(n',k')\in\Lambda}T^{n,k-1}_{n',k'-1}q^{n}e_{n',k'-1},\\
\bgamma^*T{e_{n,k}}&=\bgamma^*\sum_{(n',k')\in\Lambda}T^{n,k}_{n',k'}e_{n',k'}
=\sum_{(n',k')\in\Lambda}T^{n,k}_{n',k'}q^{n'}e_{n', k'-1}.
\end{split}
\]
and the fact that $T$ commutes with $\bgamma^*$ give
\begin{equation}\label{dwa}
T^{n,k-1}_{n',k'-1}q^n=T^{n,k}_{n',k'}q^{n'},\qquad(n,k),(n',k')\in\Lambda.
\end{equation}

Now take $(n,k),(n',k')\in\Lambda$. Combining \eqref{jeden} and \eqref{dwa} we obtain
\[
T^{n,k}_{n',k'}\overset{\eqref{jeden}}{=}q^{n-n'}T^{n,k+1}_{n',k'+1}\overset{\eqref{dwa}}{=}q^{2(n-n')}T^{n,k}_{n',k'}.
\]
which implies that
\begin{equation}\label{trzy}
T^{n,k}_{n',k'}=0,\qquad(n,k),(n',k')\in\Lambda,\:n\neq{n'}.
\end{equation}

Using this we compute for $(n,k)\in\Lambda$
\[
\begin{split}
T\balpha^*{e_{n,k}}&=T\sqrt{1-q^{2(n+1)}}\,e_{n+1,k}=\sum_{k'\in\ZZ}T^{n+1,k}_{n+1,k'}\sqrt{1-q^{2(n+1)}}\,e_{n+1,k'},\\
\balpha^*T{e_{n,k}}&=\balpha^*\sum_{k'\in\ZZ}T^{n,k}_{n,k'}e_{n,k'}=\sum_{k'\in\ZZ}T^{n,k}_{n,k'}\sqrt{1-q^{2(n+1)}}\,e_{n+1,k'},
\end{split}
\]
and using this together with \eqref{trzy} and \eqref{dwa} we get
\begin{equation}\label{warunekT}
T^{n,k}_{n',k'}=\delta_{n,n'}T^{0,k}_{0,k'}=\delta_{n,n'}T^{0,k-k'}_{0,0},\qquad(n,k),(n',k')\in\Lambda.
\end{equation}

We are now ready to finish the proof. Denote by $\vN(\underline{\Phase{\bgamma}})$ the von Neumann algebra generated by $\underline{\Phase{\bgamma}}$. The inclusion
\[
\vN(\underline{\Phase{\bgamma}})\subset\C(\mathrm{SU}_q(2))'
\]
is clear, as $\underline{\Phase{\bgamma}}$ commutes with $\balpha$ and $\bgamma$ (cf.~\eqref{balga}).

\sloppy
Now take $T\in\C(\mathrm{SU}_q(2))'$  and $S\in\vN(\underline{\Phase{\bgamma}})'$ with matrix elements $\{S^{n,k}_{n',k'}\}_{(n,k),(n',k')\in\Lambda}$. For $k_0\in\NN$ and $(n,k)\in\Lambda$ we have
\[
\begin{split}
S(\underline{\Phase{\bgamma}})^{k_0}e_{n,k}&=\sum_{(n',k')\in\Lambda}S^{n,k+k_0}_{n',k'}e_{n',k'}=\sum_{(n',k')\in\Lambda}S^{n,k+k_0}_{n',k'+k_0}e_{n',k'+k_0},\\
(\underline{\Phase{\bgamma}})^{k_0}Se_{n,k}&=\sum_{(n',k')\in\Lambda}S^{n,k}_{n',k'}e_{n',k'+k_0},
\end{split}
\]
so as $S$ commutes with $\underline{\Phase{\bgamma}}$, we obtain
\begin{equation}\label{warunekS}
S^{n,k}_{n',k'}=S^{n,k+k_0}_{n',k'+k_0}\qquad(n,k),(n',k')\in\Lambda,\;k_0\in\ZZ.
\end{equation}

Now \eqref{warunekT} and \eqref{warunekS} show that $T$ commutes with $S$:
\[
\begin{split}
\is{e_{l,p}}{TSe_{n,k}}&=\is{e_{l,p}}{T\sum_{(n',k')\in\Lambda}S^{n,k}_{n',k'}e_{n',k'}}\\
&=\is{e_{l,p}}{\sum_{(n',k')\in\Lambda}S^{n,k}_{n',k'}\sum_{k''\in\ZZ}T^{0,k'-k''}_{0,0}e_{n',k''}}\\
&=\sum_{k'\in\ZZ}S^{n,k}_{l,k'}T^{0,k'-p}_{0,0}=\sum_{k'\in\ZZ}S^{n,k}_{l,k+p-k'}T^{0,k-k'}_{0,0}\\
&=\sum_{k'\in\ZZ}S^{n,k'}_{l,p}T^{0,k-k'}_{0,0}
\end{split}
\]
and
\[
\begin{split}
\is{e_{l,p}}{STe_{n,k}}&=\is{e_{l,p}}{S\sum_{k''\in\ZZ}T^{0,k-k''}_{0,0}e_{n,k''}}\\
&=\is{e_{l,p}}{\sum_{k''\in\ZZ}T^{0,k-k''}_{0,0}\sum_{(n',k')\in\Lambda}S^{n,k''}_{n',k'}e_{n',k'}}\\
&=\sum_{k''\in\ZZ}T^{0,k-k''}_{0,0}S^{n,k''}_{l,p}.
\end{split}
\]
\sloppy
Since $S$ is arbitrary in $\vN(\underline{\Phase{\bgamma}})'$, this means that $T\in\vN(\underline{\Phase{\bgamma}})''=\vN(\underline{\Phase{\bgamma}})$, so
\[
\vN(\underline{\Phase{\bgamma}})\supset\C(\mathrm{SU}_q(2))'.
\]
\end{proof}

\begin{remark}
Let us point out that the proof of Theorem \ref{comm} does not depend on the analysis of the commutation relations \eqref{relSUq2} presented in the proof of Theorem \ref{faithfulPi}. Using this analysis one can obtain information on the center of the von Neumann algebra generated by the image of a representation of $\C(\mathrm{SU}_q(2))$ (cf.~Section \ref{gns} and Theorem \ref{centerLinf}).
\end{remark}

\subsection{Center of $\C(\mathrm{SU}_q(2))$}

As $\C(\mathrm{SU}_q(2))$ is a unital algebra, its center is non-zero because it must contain all scalar multiples of the unit $\I$. We will show that there are no other central elements in $\C(\mathrm{SU}_q(2))$ or, in other words, that the center of $\C(\mathrm{SU}_q(2))$ is \emph{trivial}.

In the proof of the next theorem we will identify $\C(\mathrm{SU}_q(2))$ with the \cst-algebra of operators on $\ell_2(\ZZ_+\times\ZZ)$ generated by $\balpha$ and $\bgamma$.

\begin{theorem}\label{centrA}
The center of $\C(\mathrm{SU}_q(2))$ is trivial.
\end{theorem}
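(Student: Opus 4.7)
The plan is to combine Theorem \ref{comm}, which forces any central element into $\vN(\underline{\Phase{\bgamma}})$, with a Toeplitz-type quotient argument: dividing out the compact operators in the first tensor factor of $\cT\tens\cU$ sends $\bgamma$ to zero and turns $\balpha$ into a Haar unitary, squeezing any central element to a scalar.

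Suppose $T\in\C(\mathrm{SU}_q(2))$ is central. By Theorem \ref{comm}, $T\in\vN(\underline{\Phase{\bgamma}})$. Under the isomorphism $\cH\cong\ell_2(\ZZ_+)\tens\ell_2(\ZZ)$ of Section \ref{Asect}, $\underline{\Phase{\bgamma}}$ acts as $\I\tens\bu$, so $\vN(\underline{\Phase{\bgamma}})=\I\tens\vN(\bu)$; Fourier transform on $\ell_2(\ZZ)$ identifies $\vN(\bu)$ with $L^\infty(\TT)$ acting by multiplication. Hence $T=\I\tens M_\phi$ for some $\phi\in L^\infty(\TT)$. On the other hand $T\in\cst(\balpha,\bgamma)\subset\cT\tens\cU$, so applying the slice map $\omega\tens\id\colon\cT\tens\cU\to\cU$ with $\omega$ the vector state at $e_0\in\ell_2(\ZZ_+)$ yields $M_\phi=(\omega\tens\id)(T)\in\cU$. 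Since $\cU$ is $\cst$-isomorphic to $C(\TT)$ (as $\bu$ is a unitary with spectrum $\TT$), this gives $\phi\in C(\TT)$.

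To conclude, use that $q^{\bN}$ is a compact operator on $\ell_2(\ZZ_+)$ (as $|q|<1$): the canonical quotient $\pi\colon\cT\to\cT/\mathcal{K}(\ell_2(\ZZ_+))\cong C(\TT)$ sends $\bs$ to the coordinate $z$ and $q^{\bN}$ to $0$. Tensoring $\pi$ with $\id_\cU$ gives a $\cst$-quotient $\Pi\colon\cT\tens\cU\to C(\TT)\tens C(\TT)\cong C(\TT^2)$ satisfying $\Pi(\balpha)=z\tens 1$ and $\Pi(\bgamma)=0$. Therefore $\Pi(\cst(\balpha,\bgamma))$ is the $\cst$-subalgebra generated by $z\tens 1$, namely $C(\TT)\tens 1$---the functions on $\TT^2$ that depend only on the first variable. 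But $\Pi(T)=1\tens\phi$ depends only on the second variable, so $\phi$ must be constant, forcing $T\in\CC\I$. The delicate step is the identification $\Pi(\cst(\balpha,\bgamma))=C(\TT)\tens 1$, which is where the Toeplitz picture of $\balpha$ and the compactness of $q^{\bN}$ conspire to annihilate the $\bu$-direction in the quotient and complete the argument.
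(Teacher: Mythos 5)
Your argument is correct, but it follows a genuinely different route from the paper's after the common starting point. Both proofs begin from the information extracted in (the proof of) Theorem \ref{comm}: a central $T$ lies in $\vN(\underline{\Phase{\bgamma}})=\I\tens L^\infty(\TT)$ under the identification $\cH\cong\ell_2(\ZZ_+)\tens\ell_2(\ZZ)$ --- this is exactly condition \eqref{warunekT}. From there the paper stays elementary: it approximates $T$ in norm by finite linear combinations of the basis $a^{k,m,n}$ of $\Pol(\mathrm{SU}_q(2))$, evaluates on the vectors $e_{l,0}$, and lets $l\to\infty$ to force the Fourier coefficients $T^{0,-s}_{0,0}$ with $s\neq 0$ to vanish --- a bare-hands norm estimate using only \eqref{dzialanie} and \eqref{dzialanie2}. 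You instead invoke the symbol map $\pi\colon\cT\to\cT/\mathcal{K}\cong C(\TT)$: since $q^{\bN}$ is compact, $\Pi=\pi\tens\id$ annihilates $\bgamma$ and sends $\balpha$ to $z\tens 1$, so $\Pi(\cst(\balpha,\bgamma))=C(\TT)\tens 1$ while $\Pi(T)=1\tens\phi$, and intersecting forces $\phi$ to be constant. Your slice-map step establishing $M_\phi\in\cU$ is genuinely needed --- without it $T$ is not exhibited as an elementary tensor in $\cT\tens\cU$ and $\Pi(T)$ cannot be computed --- and you correctly include it. What your route buys is brevity and a conceptual picture (the $\bu$-direction of the commutant cannot survive inside the algebra because the Toeplitz quotient of $\cst(\balpha,\bgamma)$ only sees the $\bs$-direction); what it costs is reliance on standard but less elementary machinery (functoriality of the minimal tensor product under the quotient map, the slice map, $\cT/\mathcal{K}\cong C(\TT)$), which runs somewhat against the paper's stated aim of direct, elementary proofs. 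One cosmetic remark: the step you flag as ``delicate'', namely $\Pi(\cst(\balpha,\bgamma))=C(\TT)\tens 1$, is in fact immediate once $\Pi(\bgamma)=0$ and $\Pi(\balpha)=z\tens 1$ are known; the real content of your proof sits in the earlier reductions.
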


\begin{proof}
Let $T$ be a central element of $\C(\mathrm{SU}_q(2))$. As $\Pol(\mathrm{SU}_q(2))$ is dense in $\C(\mathrm{SU}_q(2))$. The element $T$ can be approximated in norm by finite linear combinations of elements of the basis \eqref{baza}:
\[
T=\lim_{\lambda\to\infty}\sum_{(k,m,n)\in\Gamma}C^{\lambda}_{k,m,n}a^{k,m,n}.
\]
Given $\eps>0$ there exists $\lambda_0\in\NN$ such that for any $\lambda\geq\lambda_0$ we have
\[
\biggl\|\sum_{(k,m,n)\in\Gamma}C^{\lambda}_{k,m,n}a^{k,m,n}-T\biggr\|\leq\tfrac{\eps}{2}.
\]
As in the proof of Theorem \ref{comm} we will use matrix elements $\{T^{n,k}_{n',k'}\}_{(n,k),(n',k')\in\Lambda}$ of $T$ as defined by \eqref{matEl}.

Take $\lambda\geq\lambda_0$ and $l\in\ZZ_+$. Using \eqref{warunekT}, \eqref{dzialanie} and \eqref{dzialanie2} we obtain
{\allowdisplaybreaks
\begin{align*}
\bigl(\tfrac{\eps}{2}\bigr)^2
&\geq\biggl\|\biggl(\sum_{(k,m,n)\in\Gamma}C^{\lambda}_{k,m,n}a^{k,m,n}-T\biggr)e_{l,0}\bigg\|^2\\
&=\biggl\|\sum_{\substack{k\in\{0,\dotsc,l\}\\m,n\in\ZZ_+}}C^{\lambda}_{k,m,n}q^{l(m+n)}\prod_{i=0}^{k-1}\sqrt{1-q^{2(l-i)}}\,e_{l-k,m-n}+\\
&\qquad+\sum_{\substack{k\in-\NN\\m,n\in\ZZ_+}}C^{\lambda}_{k,m,n}q^{l(m+n)}\prod_{i=0}^{|k|-1}\sqrt{1-q^{2(l+1+i)}}\,e_{l+|k|,m-n}
-\sum_{k\in\ZZ}T^{0,-k}_{0,0}e_{l,k}\biggr\|^2\\
&=\biggl\|\sum_{\substack{k\in\{1,\dotsc,l\}\\s\in\ZZ}}\biggl(\sum_{\substack{m,n\in\ZZ_+\\m-n=s}}C^{\lambda}_{k,m,n}q^{l(m+n)}
\prod_{i=0}^{k-1}\sqrt{1-q^{2(l-i)}}\biggr)e_{l-k,s}\\
&\qquad+\sum_{\substack{k\in-\NN\\s\in\ZZ}}\biggl(\sum_{\substack{m,n\in\ZZ_+\\m-n=s}}C^{\lambda}_{k,m,n}q^{l(m+n)}
\prod_{i=0}^{|k|-1}\sqrt{1-q^{2(l+1+i)}}\biggr)e_{l+|k|,s}\\
&\qquad+\sum_{s\in\ZZ}\biggl(\sum_{\substack{m,n\in\ZZ_+\\m-n=s}}C^\lambda_{0,m,n}q^{l(m+n)}-T^{0,-s}_{0,0}\biggr)e_{l,s}\biggr\|^2\\
&=\sum_{\substack{k\in\{1,\dotsc,l\}\\s\in\ZZ}}\biggl|\sum_{\substack{m,n\in\ZZ_+\\m-n=s}}C^{\lambda}_{k,m,n}q^{l(m+n)}
\prod_{i=0}^{k-1}\sqrt{1-q^{2(l-i)}}\biggr|^2\\
&\qquad+\sum_{\substack{k\in-\NN\\s\in\ZZ}}\biggl|\sum_{\substack{m,n\in\ZZ_+\\m-n=s}}C^{\lambda}_{k,m,n}q^{l(m+n)}
\prod_{i=0}^{|k|-1}\sqrt{1-q^{2(l+1+i)}}\biggr|^2\\
&\qquad+\sum_{s\in\ZZ}\biggl|\sum_{\substack{m,n\in\ZZ_+\\m-n=s}}C^\lambda_{0,m,n}q^{l(m+n)}-T^{0,-s}_{0,0}\biggr|^2\\
&\geq\sum_{s\in\ZZ}\biggl|\sum_{\substack{m,n\in\ZZ_+\\m-n=s}}C^\lambda_{0,m,n}q^{l(m+n)}-T^{0,-s}_{0,0}\biggr|^2.
\end{align*}
}

Thus
\[
\forall_{\eps >0}\;\exists_{\lambda\in\NN}\;\forall_{l\in\ZZ_+}\;\forall_{s\in\ZZ}\quad
\biggl|\sum_{\substack{m,n\in\ZZ_+\\m-n=s}}C^\lambda_{0,m,n}q^{l(m+n)}-T^{0,-s}_{0,0}\biggr|\leq\tfrac{\eps}{2},
\]
so in particular
\begin{equation}\label{asds}
\forall_{\eps >0}\;\forall_{s\in\ZZ}\;\exists_{\lambda\in\NN}\;\forall_{l\in\ZZ_+}\quad\biggl|
\sum_{\substack{m,n\in\ZZ_+\\m-n=s}}C^\lambda_{0,m,n}q^{l(m+n)}-T^{0,-s}_{0,0}\biggr|\leq\tfrac{\eps}{2}.
\end{equation}

Let us take $s\in\ZZ\setminus\{0\}$, $\eps>0$ and let $\lambda\in\NN$ be determined by \eqref{asds}. Since
\[
\lim_{l\to\infty}\sum\limits_{\substack{m,n\in\ZZ_+\\m-n=s}}C^\lambda_{0,m,n}q^{l(m+n)}=0,
\]
there exists $l\in\NN$ such that
\[
\biggl|\sum\limits_{\substack{m,n\in\ZZ_+\\m-n=s}}C^\lambda_{0,m,n}q^{l(m+n)}\biggr|\leq\tfrac{\eps}{2}
\]
(we can pass to the limit under the sum because only a finite number of its terms are non-zero). Combining this with \eqref{asds} we obtain
\[
|T^{0,-s}_{0,0}|\leq\biggl|T^{0,-s}_{0,0}-\sum_{\substack{m,n\in\ZZ_+\\ m-n=s}}C^\lambda_{0,m,n}q^{l(m+n)}\biggr|
+\biggl|\sum_{\substack{m,n\in\ZZ_+\\m-n=s}}C^\lambda_{0,m,n}q^{l(m+n)}\biggr|\leq\tfrac{\eps}{2}+\tfrac{\eps}{2}=\eps,
\]
so for $s\in\ZZ\setminus\{0\}$ we have $|T^{0,-s}_{0,0}|=0$. Consequently $T=T^{0,0}_{0,0}\I$ which ends the proof.
\end{proof}

An immediate consequence of Theorem \ref{centrA} is the following:

\begin{corollary}\label{centrP}
The center of $\Pol(\mathrm{SU}_q(2))$ is trivial.
\end{corollary}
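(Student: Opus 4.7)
The plan is to deduce this directly from Theorem \ref{centrA} using density. Let $T$ be a central element of $\Pol(\mathrm{SU}_q(2))$, so that $Ta = aT$ for every $a \in \Pol(\mathrm{SU}_q(2))$. I want to upgrade this to the statement that $T$ lies in the center of $\C(\mathrm{SU}_q(2))$, and then invoke Theorem \ref{centrA}.

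First, I note that $\Pol(\mathrm{SU}_q(2))$, being the $*$-algebra generated by $\alpha$ and $\gamma$, is norm-dense in $\C(\mathrm{SU}_q(2))$. Given any $b \in \C(\mathrm{SU}_q(2))$, choose a sequence $a_n \in \Pol(\mathrm{SU}_q(2))$ with $a_n \to b$ in norm. Centrality of $T$ in $\Pol(\mathrm{SU}_q(2))$ gives $Ta_n = a_n T$ for every $n$; since multiplication is jointly continuous in the norm, passing to the limit yields $Tb = bT$. Hence $T$ belongs to the center of $\C(\mathrm{SU}_q(2))$.

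By Theorem \ref{centrA} this center is trivial, so $T = \lambda \I$ for some $\lambda \in \CC$. Since $\I = a^{0,0,0} \in \Pol(\mathrm{SU}_q(2))$, the scalar multiples of $\I$ lie in $\Pol(\mathrm{SU}_q(2))$, so the center of $\Pol(\mathrm{SU}_q(2))$ equals $\CC\I$, which is what "trivial" means here.

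There is no real obstacle: the only step worth remarking on is the continuity argument, which only uses that $T$ is a bounded operator (it is an element of $\C(\mathrm{SU}_q(2))$) and that $\Pol(\mathrm{SU}_q(2))$ is dense in $\C(\mathrm{SU}_q(2))$ in the \cst-norm. The content of the corollary is entirely carried by Theorem \ref{centrA}.
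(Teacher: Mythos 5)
Your proof is correct and is exactly the argument the paper intends when it calls the corollary an ``immediate consequence'' of Theorem \ref{centrA}: pass from centrality in the dense $*$-subalgebra $\Pol(\mathrm{SU}_q(2))$ to centrality in $\C(\mathrm{SU}_q(2))$ by norm-continuity of multiplication, then apply the theorem. (The paper also remarks that one could instead argue directly with the basis $\{a^{k,m,n}\}$, but that is presented only as an alternative.)
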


Let us note that Corollary \ref{centrP} can also be easily proved directly by writing a central element of $\Pol(\mathrm{SU}_q(2))$ as a linear combination of the basis \eqref{baza} and checking the conditions implied by commutation with the generators $\alpha$ and $\gamma$.

\section{Faithfulness of Haar measure and continuity of counit}\label{Haar}

\subsection{Faithfulness of the Haar measure}

\begin{proposition}\label{faithBh}
The Haar measure of the quantum group $\mathrm{SU}_q(2)$ is faithful.
\end{proposition}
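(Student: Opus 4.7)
The plan is to show that if $\bh(a^*a)=0$ for some $a\in\C(\mathrm{SU}_q(2))$, then $a=0$; faithfulness of the state then follows. Using the explicit formula for the Haar measure and viewing $\C(\mathrm{SU}_q(2))$ as operators on $\cH=\ell_2(\ZZ_+\times\ZZ)$, the hypothesis $\bh(a^*a)=0$ reads
\[
(1-q^2)\sum_{n=0}^\infty q^{2n}\|a\,e_{n,0}\|^2=0,
\]
and since every coefficient $(1-q^2)q^{2n}$ is strictly positive, this forces $a\,e_{n,0}=0$ for every $n\in\ZZ_+$.

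The next step is to upgrade this to $a\,e_{n,k}=0$ for \emph{all} $(n,k)\in\Lambda$. This is where the operator $\underline{\Phase{\bgamma}}$ introduced just before Theorem \ref{comm} enters: it is a unitary satisfying $\underline{\Phase{\bgamma}}\,e_{n,k}=e_{n,k+1}$, so $(\underline{\Phase{\bgamma}})^{k}e_{n,0}=e_{n,k}$ for all $k\in\ZZ$. Since $\underline{\Phase{\bgamma}}\in\C(\mathrm{SU}_q(2))'$ (either by Theorem \ref{comm}, or directly from the formulas \eqref{balga} showing that $\underline{\Phase{\bgamma}}$ commutes with $\balpha$ and $\bgamma$), the operator $a$ commutes with every integer power of $\underline{\Phase{\bgamma}}$. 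Hence
\[
a\,e_{n,k}=a\,(\underline{\Phase{\bgamma}})^{k}e_{n,0}=(\underline{\Phase{\bgamma}})^{k}\,a\,e_{n,0}=0
\]
for every $(n,k)\in\Lambda$. As $\{e_{n,k}\}_{(n,k)\in\Lambda}$ is an orthonormal basis of $\cH$ and $\bpi$ is faithful, this forces $a=0$.

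There is no real obstacle: the work was already done in establishing the structure of $\C(\mathrm{SU}_q(2))'$ and producing the ``vertical bilateral shift'' $\underline{\Phase{\bgamma}}$. The proof essentially amounts to recognising that the vectors $e_{n,0}$ appearing in the formula for $\bh$ are, taken together with the commutant, cyclic for the whole Hilbert space, and that this cyclicity is made explicit by a single unitary $\underline{\Phase{\bgamma}}$ in the commutant whose orbits fill out the standard basis.
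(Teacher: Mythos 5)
Your proof is correct and follows essentially the same route as the paper: extract $a\,e_{n,0}=0$ from positivity of the coefficients in the formula for $\bh$, then use that $\underline{\Phase{\bgamma}}$ lies in the commutant of $\C(\mathrm{SU}_q(2))$ to propagate this to $a\,e_{n,k}=0$ for all $(n,k)\in\Lambda$. No issues.
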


\begin{proof}
Recall that for $a\in\C(\mathrm{SU}_q(2))\subset\B(\ell_2(\ZZ_+\times\ZZ))$ we have
\[
\bh(a)=(1-q^2)\sum_{n=0}^{\infty}q^{2n}\is{e_{n,0}}{a\,e_{n,0}}.
\]
Assume now that $\bh(b^*b)=0$ for some $b\in\C(\mathrm{SU}_q(2))$. Then for all $n$ we have $\|b\,e_{n,0}\|=0$. Moreover for any $k\in\ZZ$ we have
\[
\begin{split}
\|b\,e_{n,k}\|^2&=\is{b\,e_{n,k}}{b\,e_{n,k}}\\
&=\is{b(\underline{\Phase{\bgamma}})^ke_{n,0}}{b(\underline{\Phase{\bgamma}})^ke_{n,0}}\\
&=\is{(\underline{\Phase{\bgamma}})^kb\,e_{n,0}}{(\underline{\Phase{\bgamma}})^kb\,e_{n,0}}\\
&=\is{b\,e_{n,0}}{b\,e_{n,0}}=\|b\,e_{n,0}\|^2=0
\end{split}
\]
because $b$ commutes with $\underline{\Phase{\bgamma}}$ (cf.~Theorem \ref{comm}). It follows that $b$ is zero on all elements of the standard basis of $\ell_2(\ZZ_+\times\ZZ)$, so $b=0$.
\end{proof}

\subsection{The counit}

The counit of $\mathrm{SU}_q(2)$ is the $*$-character $\eps$ of $\Pol(\mathrm{SU}_q(2))$ determined uniquely by $\eps(\alpha)=1$ and $\eps(\gamma)=0$. By universal property of the \cst-algebra $\sA$ the counit extends to a character $\sA\to\CC$. By Theorem \ref{faithfulPi} $\eps$ can be extended from the $*$-algebra generated by $\balpha$ and $\bgamma$ to $\cst(\balpha,\bgamma)$. In this section we will provide two different proofs of this fact independent of Theorem \ref{faithfulPi} (faithfulness of $\bpi$). This may be used to give an alternative proof of faithfulness of the representation $\bpi$ (cf.~Section \ref{later}).

\begin{theorem}
There exists a character $\widetilde{\eps}\colon\cst(\balpha,\bgamma)\to\CC$ such that
\begin{equation}\label{epsag}
\widetilde{\eps}(\balpha)=1\quad\text{and}\quad\widetilde{\eps}(\bgamma)=0.
\end{equation}
\end{theorem}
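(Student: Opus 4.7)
The plan is to construct $\widetilde{\eps}$ via a character on an ambient \cst-algebra, rather than directly on $\cst(\balpha,\bgamma)$; this sidesteps any appeal to Theorem~\ref{faithfulPi}. From the discussion preceding that theorem we already have $\cst(\balpha,\bgamma) \subset \cT \tens \cU$, which follows immediately from formulas~\eqref{balga} and needs no faithfulness assumption. It therefore suffices to exhibit a character of $\cT \tens \cU$ sending $\balpha$ to $1$ and $\bgamma$ to $0$, and then to restrict it.

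The central observation is that the diagonal operator $q^{\bN}$ on $\ell_2(\ZZ_+)$ is \emph{compact}, since its eigenvalues $q^n$ tend to zero. The classical Toeplitz extension (\cite[Section~V.1]{davidson}) realizes $\cT$ as an extension of $C(\TT)$ by the compact operators, with the quotient map $\pi\colon\cT \to C(\TT)$ sending $\bs$ to the coordinate function $z$; in particular $\pi(q^{\bN}) = 0$. The algebra $\cU$ is $*$-isomorphic to $C(\TT)$ via $\bu \mapsto w$. Combining these and using nuclearity of $\cT$ (\cite[Appendix~A2]{su2}), so that the spatial tensor product is unambiguous, produces a $*$-homomorphism
\[
\Phi\colon \cT \tens \cU \longrightarrow C(\TT) \tens C(\TT) \cong C(\TT \times \TT).
\]

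Applying \eqref{balga} and the fact that $*$-homomorphisms commute with continuous functional calculus, I would compute
\[
\Phi(\balpha)=\pi\bigl(\bs\sqrt{\I-q^{2\bN}}\bigr)\tens 1 = z\sqrt{1-0}\tens 1 = z\tens 1,\qquad
\Phi(\bgamma)=\pi(q^{\bN})\tens w = 0.
\]
Composing $\Phi$ with evaluation at the point $(1,1)\in\TT\times\TT$ yields a character of $\cT\tens\cU$ that sends $\balpha$ to $1$ and $\bgamma$ to $0$. Its restriction to $\cst(\balpha,\bgamma)$ is a unital $*$-homomorphism into $\CC$, non-zero because it sends $\I$ to $1$, hence a character $\widetilde{\eps}$ satisfying \eqref{epsag}.

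The only non-routine inputs are the identification $\cT/(\text{compacts}) \cong C(\TT)$ and the compactness of $q^{\bN}$; both are classical, so I expect no serious obstacle. Crucially, the construction avoids Theorem~\ref{faithfulPi}, so it can indeed serve as independent preparation for the alternative proof of the faithfulness of $\bpi$ advertised in Section~\ref{later}.
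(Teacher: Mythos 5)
Your argument is correct and is essentially the paper's second proof (``Another method''): both pass to the ambient algebra $\cT\tens\cU$ via the decomposition \eqref{balga} and apply a character of $\cT$ sending $\bs\mapsto 1$ tensored with a character of $\cU$ sending $\bu\mapsto 1$. The only difference is cosmetic: you justify that the character of $\cT$ annihilates $q^{\bN}$ by compactness of $q^{\bN}$ together with the Toeplitz extension $\cT/\mathcal{K}\cong\C(\TT)$, whereas the paper uses the norm-convergent expansion $q^{\bN}=\sum_{n}q^{n}{\bs^*}^{n}(\I-\bs^*\bs)\bs^{n}$.
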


\subsubsection{Direct method}

First note that we can easily determine $\eps$ on elements of the basis \eqref{baza} of $\Pol(\mathrm{SU}_q(2))$:
\[
\eps(a^{k,m,n})=
\begin{cases}
1&m=n=0,\\
0&\text{otherwise}.
\end{cases}
\]
Thus in order to show that the counit extends from $\Pol(\mathrm{SU}_q(2))$ to $\cst(\balpha,\bgamma)$ it will be enough to exhibit a continuous linear functional $\omega$ on $\B(\cH)$ such that $\omega(\balpha^{k}\bgamma^m{\bgamma^*}^n)=\delta_{m,0}\delta_{n,0}=\omega({\balpha^*}^{k}\bgamma^m{\bgamma^*}^n)$ for all $k,m,n\in\ZZ_+$. Indeed, as the counit is multiplicative on $\Pol(\mathrm{SU}_q(2))$ it is easy to see that the restriction $\widetilde{\eps}$ of $\omega$ to $\cst(\balpha,\bgamma)$ is a character which maps $\balpha$ to $1$ and $\bgamma$ to $0$.

Define a sequence $(\omega_L)_{L\in\NN}$ of functionals on $\B(\cH)$:
\[
\omega_L(T)=\tfrac{1}{L+1}\is{\sum_{l=0}^{L}e_{l,0}}{T\sum_{l=0}^{L}e_{l,0}},\qquad{T}\in\B(\cH).
\]
Clearly each $\omega_L$ is positive, and so $\|\omega_L\|=\omega_L(\I)=1$ for all $L$. Standard facts about weak topologies (\cite[Chapter 1]{sz}) show that there exists a subsequence $(\omega_{L_p})_{p\in\NN}$ of $(\omega_L)_{L\in\NN}$ weak${}^*$ convergent to $\omega\in\B(\cH)^*$:
\[
\omega=\text{w}^*\text{-}\!\lim_{p\to\infty}\omega_{L_p}
\]
(moreover $\|\omega\|=1$). We will show that $\widetilde{\eps}=\bigl.\omega\bigr|_{\cst(\balpha,\bgamma)}$ satisfies \eqref{epsag}.

\begin{lemma}\label{lemat}
For $k\in\ZZ\setminus\{0\}$ and $L\in\NN$ define
\[
\Xi_{k,L}=
\begin{cases}
\sum\limits_{l=0}^{L-k}\prod\limits_{i=0}^{k-1}\sqrt{1-q^{2(l+k-i)}}&k>0,\rule[-0.7Ex]{0pt}{5Ex}\\
\sum\limits_{l=0}^{L-|k|}\prod\limits_{i=0}^{|k|-1}\sqrt{1-q^{2(l+1+i)}}&k<0.\rule[-3Ex]{0pt}{8Ex}
\end{cases}
\]
Then for any $k\in\ZZ\setminus\{0\}$ we have $\lim_{L\to\infty}\frac{\Xi_{k,L}}{L+1}=1$.
\end{lemma}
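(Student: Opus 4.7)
The plan is to recognize $\Xi_{k,L}$ as a Cesàro-type sum whose summands each tend to $1$, so that $\Xi_{k,L}/(L+1)$ differs from a genuine Cesàro average by a factor tending to $1$.

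First, since $|q|<1$, the quantity $q^{2m}$ decreases to $0$ as $m\to\infty$; in particular each factor $\sqrt{1-q^{2m}}$ lies in $[0,1]$ and tends to $1$. Fix $k>0$ and for $l\in\ZZ_+$ put
\[
a_l=\prod_{i=0}^{k-1}\sqrt{1-q^{2(l+k-i)}},
\]
so that $\Xi_{k,L}=\sum_{l=0}^{L-k}a_l$ for $L\geq k$. Since $k$ is fixed and each of the $k$ factors defining $a_l$ tends to $1$ as $l\to\infty$, we have $a_l\to 1$ and $0\leq a_l\leq 1$.

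Next, I would invoke the standard Cesàro-mean result: if $a_l\to 1$ then
\[
\lim_{L\to\infty}\frac{1}{L-k+1}\sum_{l=0}^{L-k}a_l=1.
\]
Combining this with the identity
\[
\frac{\Xi_{k,L}}{L+1}=\frac{L-k+1}{L+1}\cdot\frac{\Xi_{k,L}}{L-k+1}
\]
and the obvious fact $(L-k+1)/(L+1)\to 1$ for fixed $k$, one concludes that $\Xi_{k,L}/(L+1)\to 1$. The case $k<0$ is treated identically, with $\prod_{i=0}^{|k|-1}\sqrt{1-q^{2(l+1+i)}}$ playing the role of $a_l$ and $L-|k|+1$ in place of $L-k+1$.

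No step is genuinely delicate here; the only thing to be careful about is the bookkeeping in the product indices. If one preferred to avoid invoking Cesàro, the elementary inequality $\prod_j(1-x_j)\geq 1-\sum_j x_j$ applied to $x_j=q^{2(l+j)}\in[0,1)$ gives $1-a_l^2\leq kq^{2(l+1)}$ and hence $1-a_l\leq kq^{2(l+1)}$, so $\sum_{l=0}^{\infty}(1-a_l)<\infty$. This makes $\Xi_{k,L}=(L-k+1)-O(1)$ and the claim becomes immediate.
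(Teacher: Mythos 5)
Your proof is correct, but it takes a genuinely different and more economical route than the paper. The paper fixes $k$, forms the series $\sum_{l\geq 0}\bigl(1-\prod_i\sqrt{1-q^{2(l+\theta\pm i)}}\bigr)$, proves its convergence by computing the limit of the ratio of consecutive terms via L'H\^opital's rule (obtaining $q^2<1$) and applying d'Alembert's test, and then writes $\Xi_{k,L}=L+1-|k|-G_k+R_{L,k}$ with $R_{L,k}\to 0$. Your main argument sidesteps summability altogether: since each summand $a_l$ is a product of a \emph{fixed} number $|k|$ of factors each tending to $1$, you only need $a_l\to 1$ plus the Ces\`aro-mean theorem and the trivial ratio $(L-|k|+1)/(L+1)\to 1$. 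That is strictly less information than the paper extracts, and it is all the lemma requires. Your closing remark is essentially the paper's strategy, but with the L'H\^opital/ratio-test computation replaced by the one-line bound $1-a_l\leq 1-a_l^2\leq |k|\,q^{2(l+1)}$ coming from $\prod_j(1-x_j)\geq 1-\sum_j x_j$; this is cleaner than the paper's convergence proof and even yields a quantitative error $\Xi_{k,L}=(L-|k|+1)-O(1)$ with geometric tail. Both of your variants are complete; the index bookkeeping (exponents running over $l+1,\dots,l+|k|$ in both the $k>0$ and $k<0$ cases) checks out.
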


\begin{proof}
Fix $k\in\ZZ\setminus\{0\}$ and $(\theta,\pm)=\begin{cases}(k,-)& k>0,\\(1,+)&k<0.\end{cases}$
The series
\[
\sum_{l=0}^{\infty}\biggl(1-\prod_{i=0}^{|k|-1}\sqrt{1-q^{2(l+\theta\pm{i})}}\biggr)
\]
is convergent to some $G_k\in\RR$. Indeed, using in the first step L'H\^opital's rule we obtain
\[
\begin{split}
\lim_{l\to\infty}&\frac{1-\prod\limits_{i=0}^{|k|-1}\sqrt{1-q^{2(l+1+\theta\pm{i})}}}{1-\prod\limits_{i=0}^{|k|-1}\sqrt{1-q^{2(l+\theta\pm i)}}}\\
&\overset{H}{=}\lim_{l\to\infty}\frac{-\sum\limits_{j=0}^{|k|-1}\biggl(\prod\limits_{i\in\{0,\dots,|k|-1\}\setminus\{j\}}
\sqrt{1-q^{2(l+1+\theta\pm i)}}\biggr)\frac{-q^{2(l+1+\theta\pm{j})}\log(q^2)}{2\sqrt{1-q^{2(l+1+\theta\pm{j})}}}}
{-\sum\limits_{j=0}^{|k|-1}\biggl(\prod\limits_{i\in\{0,\dots,|k|-1\}\setminus\{j\}}\sqrt{1-q^{2(l+\theta\pm{i})}}\biggr)
\frac{-q^{2(l+\theta\pm j)}\log(q^2)}{2\sqrt{1-q^{2(l+\theta\pm{j})}}}}\\
&=\lim_{l\to\infty}\frac{\sum\limits_{j=0}^{|k|-1}\biggl(\prod\limits_{i\in\{0,\dots,|k|-1\}\setminus\{j\}}
\sqrt{1-q^{2(l+1+\theta\pm{i})}}\biggr)\frac{q^{2(1+\theta\pm{j})}}{\sqrt{1-q^{2(l+1+\theta\pm{j})}}}}
{\sum\limits_{j=0}^{|k|-1}\biggl(\prod\limits_{i\in\{0,\dots,|k|-1\}\setminus\{j\}}\sqrt{1-q^{2(l+\theta\pm i)}}\biggr)
\frac{q^{2(\theta\pm{j})}}{\sqrt{1-q^{2(l+\theta\pm{j})}}}}\\
&=\frac{\sum\limits_{j=0}^{|k|-1}q^{2(1+\theta\pm{j})}}{\sum\limits_{j=0}^{|k|-1}q^{2(\theta\pm{j})}}=q^2<1,
\end{split}
\]
so the series is convergent by d'Alembert's ratio test.

Then for any $L\geq|k|$ the numbers
\[
R_{L,k}=G_k-\sum_{l=0}^{L-|k|}\biggl(1-\prod_{i=0}^{|k|-1}\sqrt{1-q^{2(l+\theta\pm i)}}\biggr)
\]
satisfy $\lim\limits_{L\to\infty}R_{L,k}=0$. After simple manipulation we arrive at
\[
\Xi_{k,L}=\sum_{l=0}^{L-|k|}\prod_{i=0}^{|k|-1}\sqrt{1-q^{2(l+\theta\pm{i})}}=L+1-|k|-G^{k}+R^{k}_{L},
\]
which proves the lemma.
\end{proof}

We can now check values of $\omega$ on basic elements: choose $k,m,n\in\ZZ_+$ and $L\in\NN$. We have
\[
\begin{split}
\omega_{L}(\balpha^k\bgamma^m{\bgamma^*}^n)
&=\tfrac{1}{L+1}\is{\sum_{l=0}^{L}e_{l,0}}{\balpha^k\bgamma^m{\bgamma^*}^n\sum_{l'=0}^{L}e_{l',0}}\\
&=\tfrac{1}{L+1}\sum_{l=0}^{L}\sum_{l'=0}^{L}[l'-k\ge0]\is{e_{l,0}}{q^{l'(m+n)}\prod_{i=0}^{k-1}\sqrt{1-q^{2(l'-i)}}e_{l'-k,m-n}}\\
&=[m=n]\tfrac{1}{L+1}\sum_{l=0}^{L}\bigl[k+l\in\{0,\dots,L\}\bigr]q^{2(k+l)n}\prod_{i=0}^{k-1}\sqrt{1-q^{2(k+l-i)}}\\
&=\bigl[(m=n)\wedge(L-k\geq{0})\bigr]\tfrac{1}{L+1}\sum_{l=0}^{L-k}q^{2(k+l)n}\prod_{i=0}^{k-1}\sqrt{1-q^{2(k+l-i)}},
\end{split}
\]
where $[\,\dotsm]$ denotes $1$ if the logical expression in brackets is true and $0$ otherwise. It follows that for $m\neq{n}$
\[
\omega(\balpha^k\bgamma^m{\bgamma^*}^n)=\lim_{p\to\infty}\omega_{L_p}(\balpha^k\bgamma^m{\bgamma^*}^n)=0.
\]
When $m=n>0$ we have
\[
\lim_{p\to\infty}\sum_{l=0}^{L_p-k}q^{2(k+l)n}\prod_{i=0}^{k-1}\sqrt{1-q^{2(k+l-i)}}\leq{q}^{2kn}\sum_{l=0}^{\infty}q^{2ln}=\frac{q^{2kn}}{1-q^{2n}}<\infty
\]
for all $p$, so
\[
\begin{split}
\omega(\balpha^k\bgamma^m{\bgamma^*}^n)
&=\lim_{p\to\infty}\omega_{L_p}(\balpha^k\bgamma^m{\bgamma^*}^n)\\
&=\lim_{p\to\infty}\tfrac{1}{L_p+1}\sum_{l=0}^{L_p-k}q^{2(k+l)n}\prod_{i=0}^{k-1}\sqrt{1-q^{2(k+l-i)}}=0.
\end{split}
\]
Similarly
\begin{equation*}	
\begin{split}
\omega_{L}({\balpha^*}^k&\bgamma^m{\bgamma^*}^n)
=\tfrac{1}{L+1}\is{\sum_{l=0}^{L}e_{l,0}}{{\balpha^*}^k\bgamma^m{\bgamma^*}^n\sum_{l'=0}^{L}e_{l',0}}\\
&=\tfrac{1}{L+1}\sum_{l=0}^{L}\sum_{l'=0}^{L}\is{e_{l,0}}{q^{l'(m+n)}\prod_{i=0}^{k-1}\sqrt{1-q^{2(l'+1+i)}}e_{l'+k,m-n}}\\
&=[m=n]\frac{1}{L+1}\sum_{l=0}^{L}\bigl[-k+l\in\{0,\dots,L\}\bigr]q^{2(-k+l)n}\prod_{i=0}^{k-1}\sqrt{1-q^{2(-k+l+1+i)}}\\
&=\bigl[(m=n)\wedge(L\geq{k})\bigr]\tfrac{1}{L+1}\sum_{l=k}^{L}q^{2(-k+l)n}\prod_{i=0}^{k-1}\sqrt{1-q^{2(-k+l+1+i)}}\\
&=\bigl[(m=n)\wedge(L\geq{k})\bigr]\tfrac{1}{L+1}\sum_{l=0}^{L-k}q^{2ln}\prod_{i=0}^{k-1}\sqrt{1-q^{2(l+1+i)}}.
\end{split}
\end{equation*}
and arguing as before we obtain $\omega({\balpha^*}^k\bgamma^m{\bgamma^*}^n)$ for $(m,n)\neq(0,0)$.

Finally for $m=n=0$ and any $k\in\NN$ we have by Lemma \ref{lemat}
\[
\begin{split}
\omega(\balpha^k)&=\lim_{p\to\infty}\tfrac{1}{L_p+1}\sum_{l=0}^{L_p-k}\prod_{i=0}^{k-1}\sqrt{1-q^{2(l+k-i)}}
=\lim_{p\to\infty}\tfrac{\Xi_{k,L_p}}{L_p+1}=1,\\
\omega({\balpha^*}^k)&=\lim_{p\to\infty}\tfrac{1}{L_p+1}\sum_{l=0}^{L_p-k}\prod_{i=0}^{k-1}\sqrt{1-q^{2(l+1+i)}}
=\lim_{p\to\infty}\frac{\Xi_{-k,L_p}}{L_p+1}=1.
\end{split}
\]
As $\omega(\I)=\|\omega\|=1$ we see that $\omega$ has the same values on basic elements of $\Pol(\mathrm{SU}_q(2))$ (embedded in $\cst(\balpha,\bgamma)\subset\B(\cH)$) as the counit.

\begin{remark}
Consider now functionals $\{\omega_L\}_{L\in\NN}$ restricted to $\cst(\balpha,\bgamma)$. Then the above arguments show in fact that the sequence $(\omega_L)_{L\in\NN}$ converges in the weak${}^*$ topology on $\cst(\balpha,\bgamma)^*$ to $\widetilde{\eps}$. Indeed, assuming that $(\omega_L)_{L\in\NN}$ is not convergent we would choose a subsequence $(\omega_{L_p'})_{p\in\NN}$ all of whose elements lie outside a fixed weak${}^*$ neighborhood $\mathcal{O}$ of $\widetilde{\eps}$. But this subsequence would still have a convergent subsequence $(\omega_{L_{p_s}'})_{s\in\NN}$ which by the same arguments as above can be shown to converge on elements $\{\balpha^k\bgamma^m{\bgamma^*}^{n}\}$ and $\{{\balpha^*}^k\bgamma^m{\bgamma^*}^{n}\}$ to $\delta_{m,0}\delta_{n,0}$ which by density of the span of these elements in $\cst(\balpha,\bgamma)$ contradicts the fact that elements of $(\omega_{L_p'})_{p\in\NN}$ lie outside of  $\mathcal{O}$.
\end{remark}

\subsubsection{Another method}

Instead of exhibiting a concrete sequence of vector functionals converging to an extension of $\eps$ to $\cst(\balpha,\bgamma)$ we can use the structure of the pair $(\balpha,\bgamma)$ described in Section \ref{fpi} and the universal property of the Toeplitz algebra to show existence of $\widetilde{\eps}$. Indeed, as we have
\[
\begin{split}
\balpha&=\bs\sqrt{\I-q^{2\bN}}\tens\I,\\
\bgamma&=q^{\bN}\tens\bu,
\end{split}
\]
(when $\cH$ is naturally identified with $\ell_2(\ZZ_+)\tens\ell_2(\ZZ)$) the \cst-algebra $\cst(\balpha,\bgamma)$ is contained in $\cT\tens\cU$ (cf.~Section \ref{fpi}). By universal properties of $\cT$ and $\cU$ there are characters $\varphi$ and $\psi$ of these algebras such that $\varphi(\bs)=1=\psi(\bu)$. Moreover, noting that
\[
q^{\bN}=\sum_{n=0}^{\infty}q^n{\bs^*}^n(\I-\bs^*\bs)\bs^n
\]
we find that
\[
(\varphi\tens\psi)(\bgamma)=
(\varphi\tens\psi)(q^{\bN}\tens\bu\bigr)=\varphi(q^{\bN})=\varphi\biggl(\sum_{n=0}^{\infty}q^n{\bs^*}^n(\I-\bs^*\bs)\bs^n\biggr)=0
\]
and
\[
(\varphi\tens\psi)(\balpha)=\varphi\bigl(\bs\sqrt{\I-q^{2\bN}}\bigr)=\varphi(\bs)\varphi\bigl(\sqrt{\I-q^{2\bN}}\bigr)=\varphi(\bs)\varphi\bigl(\I)=1.
\]

Clearly restriction of $\varphi\tens\psi$ to $\cst(\balpha,\bgamma)$ is the extension of the counit of $\mathrm{SU}_q(2)$.

\section{The GNS representation for $\bh$ and center of $\Linf(\mathrm{SU}_q(2))$}\label{gns}

We begin this section with a concrete realization of the GNS representation of $\C(\mathrm{SU}_q(2))$ for the Haar measure. Recall that
\[
\bh(a)=(1-q^2)\sum_{n=0}^{\infty}q^{2n}\is{e_{n,0}}{a\,e_{n,0}},\qquad{a}\in\C(\mathrm{SU}_q(2)).
\]
Let $(\cH_{\bh},\Omega_{\bh},\pi_{\bh})$ be the GNS triple for $\bh$. We already know that $\bh$ is faithful on $\C(\mathrm{SU}_q(2))$, so $\pi_{\bh}$ is faithful.

As before we write $\cH$ for the carrier Hilbert space of the distinguished representation $\bpi$ considered in Section \ref{fpi}. Consider $\widehat{\cH}=\bigoplus\limits_{n=0}^\infty\cH$ and a vector
\[
\widehat{\Omega}=
\sqrt{1-q^2}\begin{bmatrix}
q^0e_{0,0}\\
q^1e_{1,0}\\
q^2e_{2,0}\\
\vdots
\end{bmatrix}\in\widehat{\cH}.
\]
We have a representation $\widehat{\bpi}$ of $\C(\mathrm{SU}_q(2))$ on $\widehat{\cH}$:
\[
\widehat{\bpi}(a)=
\begin{bmatrix}
a\\
&a\\
&&a\\
&&&\ddots
\end{bmatrix}.
\]
\sloppy
The subspace $\widehat{\cK}=\bigl\{\widehat{\bpi}(a)\widehat{\Omega}{\st{a}\in\C(\mathrm{SU}_q(2))\bigr\}}^{\rule[.5ex]{2ex}{.1ex}}$ is invariant for the action of $\C(\mathrm{SU}_q(2))$ and the resulting representation of $\C(\mathrm{SU}_q(2))$ is equivalent to $\pi_{\bh}$. The appropriate unitary operator $\cH_{\bh}\to\widehat{\cK}$ is given by
\[
\pi_{\bh}(a)\Omega_{\bh}\longmapsto\widehat{\bpi}(a)\widehat{\Omega},\qquad{a}\in\C(\mathrm{SU}_q(2)).
\]
As an immediate corollary we thus get the following:

\begin{proposition}
The operator $\pi_{\bh}(\bgamma)$ has zero kernel.
\end{proposition}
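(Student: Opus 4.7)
My plan is to exploit the concrete model of the GNS representation set up just before the statement, under which $\pi_{\bh}$ is unitarily equivalent to the restriction of $\widehat{\bpi}$ to the invariant subspace $\widehat{\cK}\subset\widehat{\cH}=\bigoplus_{n=0}^\infty\cH$. Under this equivalence $\pi_{\bh}(\bgamma)$ corresponds to $\widehat{\bpi}(\bgamma)|_{\widehat{\cK}}$, and $\widehat{\bpi}(\bgamma)$ is the block-diagonal operator with $\bgamma$ on every summand. Consequently, the kernel of $\widehat{\bpi}(\bgamma)$ is $\bigoplus_{n=0}^\infty\ker\bgamma$, and it will be enough to show that $\bgamma$ has trivial kernel on $\cH$: then $\widehat{\bpi}(\bgamma)$ is injective on $\widehat{\cH}$ and a fortiori on the invariant subspace $\widehat{\cK}$.

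The first step is therefore to verify that $\ker\bgamma=\{0\}$. I would simply read this off \eqref{dzialanie}: for $v=\sum_{(n,k)\in\Lambda}c_{n,k}e_{n,k}\in\cH$ one has
\[
\|\bgamma v\|^2=\sum_{(n,k)\in\Lambda}|c_{n,k}|^2\,q^{2n},
\]
which vanishes iff every $c_{n,k}$ is zero, since $q\neq 0$. Equivalently, the tensor factorization \eqref{balga} exhibits $\bgamma^*\bgamma=q^{2\bN}\tens\I$ as the tensor product of the strictly positive operator $q^{2\bN}$ on $\ell_2(\ZZ_+)$ with the identity, so $\bgamma^*\bgamma$, and hence $\bgamma$, has trivial kernel.

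The second and final step is the passage from injectivity of $\widehat{\bpi}(\bgamma)$ on $\widehat{\cH}$ to injectivity of $\pi_{\bh}(\bgamma)$, which amounts to the trivial observation that the restriction of an injective operator to an invariant subspace is injective; that $\widehat{\cK}$ is invariant under $\widehat{\bpi}(\bgamma)$ is immediate from its definition as a cyclic subspace for $\widehat{\bpi}$. There is no substantial obstacle: once the explicit GNS model is in hand, the statement collapses to the one-line verification that $\bgamma$ kills no vector of $\cH$. It is worth noting that $0$ does belong to $\Sp(\bgamma^*\bgamma)$ as an accumulation point (cf.~the proof of Theorem \ref{faithfulPi}), but not as an eigenvalue, so there is no tension between this proposition and the spectral analysis carried out earlier.
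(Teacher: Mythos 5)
Your proposal is correct and follows exactly the paper's own argument: the paper's proof is the one-line observation that $\ker\bgamma=\{0\}$, hence $\ker\widehat{\bpi}(\bgamma)=\{0\}$, and $\pi_{\bh}(\bgamma)$ is unitarily equivalent to a restriction of $\widehat{\bpi}(\bgamma)$ to the invariant subspace $\widehat{\cK}$. You merely spell out the (correct) verification that $\bgamma$ is injective, which the paper takes as evident from \eqref{dzialanie}.
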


\begin{proof}
Since $\ker{\bgamma}=\{0\}$, we have $\ker\widehat{\bpi}(\bgamma)=\{0\}$ and $\pi_{\bh}(\bgamma)$ is unitarily equivalent to a restriction of $\widehat{\bpi}(\bgamma)$ to the subspace $\widehat{\cK}$.
\end{proof}

The algebra $\Linf(\mathrm{SU}_q(2))$ is by definition the strong closure of $\pi_{\bh}\bigl(\C(\mathrm{SU}_q(2))\bigr)$ in $\B(\cH_{\bh})$. It is a von Neumann algebra and it is referred to as the algebra of \emph{essentially bounded functions on the quantum group $\mathrm{SU}_q(2)$}.

Recall that in Section \ref{cen} we've introduced a function $f$:
\[
f\colon \Sp (\pi_{\bh}(\bgamma^*\bgamma))=\{0\}\cup\bigl\{q^{2n}\; \big| \; n\in\ZZ_+\bigr\}\rightarrow \CC\colon \begin{cases} 0\mapsto 1, \\ q^{2n} \mapsto (\sgn (q))^n, \qquad{ n\in\ZZ_+}.\end{cases}
\]
Let's define an operator $\underline{\Phase{\pi_{\bh}(\bgamma)}}=f(\pi_{\bh}(\bgamma^*\bgamma))\Phase{\pi_{\bh}(\bgamma)}\in \Linf(\mathrm{SU}_q(2))$.

\begin{theorem}\label{centerLinf}
\sloppy
The center of $\Linf(\mathrm{SU}_q(2))$ is the von Neumann subalgebra of $\Linf(\mathrm{SU}_q(2))$ generated by $\underline{\Phase{\pi_{\bh}(\bgamma)}}$.
\end{theorem}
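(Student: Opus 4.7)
The plan is to reduce the center computation to a clean tensor product identification by applying the canonical form analysis of Theorem \ref{faithfulPi} to the pair $(\pi_{\bh}(\balpha),\pi_{\bh}(\bgamma))$. Since the preceding Proposition gives $\ker\pi_{\bh}(\bgamma)=\{0\}$, the entire Hilbert space $\cH_{\bh}$ plays the role of the ``non-kernel'' summand: one obtains a Hilbert space $\cK_0=\cH_{\bh}(|\pi_{\bh}(\bgamma)|=|q|)$, a unitary $u_0=\bigl.\Phase{\pi_{\bh}(\bgamma)}\bigr|_{\cK_0}\in\B(\cK_0)$, and a unitary $U\colon\cH_{\bh}\to\ell_2(\ZZ_+)\tens\cK_0$ satisfying
\[
U\pi_{\bh}(\balpha)U^*=\bs\sqrt{\I-q^{2\bN}}\tens\I,\qquad U\pi_{\bh}(\bgamma)U^*=q^{\bN}\tens u_0.
\]
Reading off the polar decomposition of $q^{\bN}\tens u_0$ gives $U\Phase{\pi_{\bh}(\bgamma)}U^*=(\sgn q)^{\bN}\tens u_0$ and $Uf(\pi_{\bh}(\bgamma^*\bgamma))U^*=(\sgn q)^{\bN}\tens\I$, whose product is $\I\tens u_0$. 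Thus $U\underline{\Phase{\pi_{\bh}(\bgamma)}}U^*=\I\tens u_0$.

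The heart of the argument is the identification
\[
U\Linf(\mathrm{SU}_q(2))U^*=\B(\ell_2(\ZZ_+))\vtens\vN(u_0).
\]
The inclusion $\subseteq$ is immediate since both generators visibly lie on the right-hand side. For $\supseteq$ I would proceed in three steps. First, the positive element $q^{2\bN}\tens\I=(q^{\bN}\tens u_0)(q^{\bN}\tens u_0)^*$ is in $U\Linf U^*$, and since it has discrete spectrum $\{q^{2n}\}$, all spectral projections $P_n\tens\I$ onto the individual $e_n$-levels belong to $U\Linf U^*$. Second, multiplying $P_n\tens\I$ by $q^{\bN}\tens u_0$ and rescaling yields $P_n\tens u_0$, and by taking adjoints, powers, and SOT-limits one obtains $\I\tens\vN(u_0)\subset U\Linf U^*$. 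Third, multiplying $P_n\tens\I$ (with $n\geq 1$) by $\bs\sqrt{\I-q^{2\bN}}\tens\I$ and dividing by the nonzero scalar $\sqrt{1-q^{2n}}$ produces the matrix unit $|e_{n-1}\rangle\langle e_n|\tens\I$; iteration and adjoints fill out $\B(\ell_2(\ZZ_+))\tens\I$ in the SOT closure. Together these give the opposite inclusion.

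With $\Linf(\mathrm{SU}_q(2))$ identified as the von Neumann tensor product $\B(\ell_2(\ZZ_+))\vtens\vN(u_0)$, the conclusion follows from the standard formula $Z(M_1\vtens M_2)=Z(M_1)\vtens Z(M_2)$: since $\B(\ell_2(\ZZ_+))$ is a factor and $\vN(u_0)$ is abelian (being generated by a single normal operator), the center corresponds to $\CC\vtens\vN(u_0)=\I\tens\vN(u_0)$, which by the first paragraph coincides with $U\vN(\underline{\Phase{\pi_{\bh}(\bgamma)}})U^*$.

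The step that requires the most care is the identification of $\Linf(\mathrm{SU}_q(2))$ as the full tensor product: one must verify that the weighted-shift generator $\bs\sqrt{\I-q^{2\bN}}$ really does generate all of $\B(\ell_2(\ZZ_+))$ in the weak operator topology, paying attention to the vanishing of $\sqrt{1-q^{2n}}$ at $n=0$ so that the rescaling step used to extract matrix units is only applied for $n\geq 1$ (the $n=0$ matrix units are then recovered via adjoints). Once this is in place, the central computation reduces to bookkeeping with the polar decomposition of $q^{\bN}\tens u_0$ and the standard tensor-product center formula.
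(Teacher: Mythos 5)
Your argument is correct and follows essentially the same route as the paper: both proofs hinge on $\ker\pi_{\bh}(\bgamma)=\{0\}$ together with the canonical form $U\pi_{\bh}(\bgamma)U^*=q^{\bN}\tens u_0$, $U\pi_{\bh}(\balpha)U^*=\bs\sqrt{\I-q^{2\bN}}\tens\I$ from the proof of Theorem \ref{faithfulPi}, and then identify the center with $\I\tens\vN(u_0)=U\,\vN(\underline{\Phase{\pi_{\bh}(\bgamma)}})\,U^*$. The only difference is cosmetic and lies in the last step: the paper bounds the commutant by noting that $q^{2\bN}$ and $\bs\sqrt{\I-q^{2\bN}}$ generate the compacts on $\ell_2(\ZZ_+)$, whereas you build the matrix units by hand to identify $\Linf(\mathrm{SU}_q(2))$ with $\B(\ell_2(\ZZ_+))\vtens\vN(u_0)$ and then invoke the tensor-product center formula --- the same computation in slightly different packaging.
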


\begin{proof}
Let $\alpha_0$ and $\gamma_0$ be images of $\alpha$ and $\gamma$ in the representation $\pi_{\bh}$. We know already that $\ker{\gamma_0}=\{0\}$, so by the reasoning presented in the proof of Theorem \ref{faithfulPi} we have
\[
\begin{split}
U\gamma_0U^*&=q^{\bN}\tens{u_0},\\
U\alpha_0U^*&=\bs\sqrt{\I-q^{2\bN}}\tens\I,
\end{split}
\]
where $U$ is a unitary $\cH_{\bh}\to\ell_2(\ZZ_+)\tens\cK_0$ for some Hilbert space $\cK_0$ and a unitary $u_0$ on $\cK_0$. 

Consequently the algebra generated by $\underline{\Phase{\gamma_0}}=U^*(\I\tens{u_0})U$ is contained in the center of the von Neumann algebra generated by $\alpha_0$ and $\gamma_0$. 

On the other hand, if $T\in\B(\ell_2(\ZZ_+)\tens\cK_0)=\B(\ell_2(\ZZ_+))\vtens\B(\cK_0)$ commutes with $U\alpha_0U^*$ and $U\gamma_0U^*$ then it must commute with $q^{2\bN}\tens\I$ and $\bs\sqrt{\I-q^{2\bN}}\tens\I$. However, the \cst-algebra generated by these two operators is $\mathcal{K}\tens\I$ where $\mathcal{K}$ is the algebra of compact operators on $\ell_2(\ZZ_+)$. It follows that $T\in\I\tens\B(\cH)$. 

It follows that the center of $\vN(U\alpha_0U^*,U\gamma_0U^*)$ is the von Neumann algebra generated by $\I\tens{u_0}$ which is obviously contained in $\vN(U\alpha_0U^*,U\gamma_0U^*)$. Consequently the center of $\vN(\alpha_0,\gamma_0)$ is generated by $U^*(\I\tens{u_0})U=\underline{\Phase{\gamma_0}}$.
\end{proof}

\section{Another proof of faithfulness of $\bpi$}\label{later}

In this section we will outline an alternative way to prove faithfulness of the representation $\bpi$ considered in Section \ref{fpi}.

First let us note the following facts
\begin{itemize}
\item $\bpi$ is an epimorphism from the universal \cst-algebra $\sA$ generated by $\alpha$ and $\gamma$ satisfying \eqref{relSUq2} onto $\cst(\balpha,\bgamma)$ acting on $\cH=\ell_2(\ZZ_+\times\ZZ)$;
\item the Haar functional on $\sA$ factorizes through $\bpi$ and a faithful functional 
\[
\bh_0\colon{a}\longmapsto(1-q^2)\sum_{n=0}^{\infty}q^{2n}\is{e_{n,0}}{a\,e_{n,0}}
\]
on $\cst(\balpha,\bgamma)$ (by Proposition \ref{faithBh});
\item there exists a comultiplication on $\cst(\balpha,\bgamma)$ which agrees with the comultiplication on $\sA$ (this follows from results of either \cite{contraction} or \cite{lance} and is rather non-trivial), in particular $\cst(\balpha,\bgamma)$ with this comultiplication carries the structure of an algebra of functions on a compact quantum group $\GG_0$;
\item the functional $\bh_0$ is the Haar measure of this quantum group;
\item the map $\bpi$ is injective on the $*$-algebra $\Pol(\mathrm{SU}_q(2))$ generated inside $\sA$ by $\alpha$ and $\gamma$ (\cite[Theorem 1.2]{su2}), in particular we may view $\cst(\balpha,\bgamma)$ as a completion of $\Pol(\mathrm{SU}_q(2))$ for a quantum group norm (\cite[Definition 7.1]{dkps});
\item the algebra $\cst(\balpha,\bgamma)$ admits a character which is a counit for $\GG_0$.
\end{itemize}

By results of \cite{bmt} a Hopf $*$-algebra which admits a completion with continuous counit and faithful Haar measure admits a unique compact quantum group completion. In particular $\sA$ and $\cst(\balpha,\bgamma)$ must be the same and consequently $\bpi$ must be faithful.

\end{document}